\newtheorem*{rep@theorem}{\rep@title}
\newcommand{\newreptheorem}[2]{
\newenvironment{rep#1}[1]{
\def\rep@title{#2~\ref{##1}}
\begin{rep@theorem}}
{\end{rep@theorem}}}
\newtheorem*{thm*}{Theorem}
\newtheorem*{conj*}{Conjecture}
\newtheorem{thmO}{Theorem}
\newtheorem{propO}[thmO]{Proposition}
\newtheorem{corO}[thmO]{Corollary}
\newtheorem{questO}[thmO]{Question}
\theoremstyle{definition}
\newtheorem{defn*}{Definition}
\newtheorem*{example*}{Example}
\newtheorem*{comment*}{Comment}
\newenvironment{bullets} {\vspace{-3pt}\begin{itemize}\itemsep0pt} {\end{itemize}\vspace{-3pt}}
\newcommand{\CCC}{\mathcal{C}}
\newcommand{\DDD}{\mathcal{D}}
\newcommand{\GGG}{\mathcal{G}}
\newcommand{\HHH}{\mathcal{H}}
\newcommand{\KKK}{\mathcal{K}}
\newcommand{\LLL}{\mathcal{L}}
\newcommand{\PPP}{\mathcal{P}}
\newcommand{\QQQ}{\mathcal{Q}}
\newcommand{\SSS}{\mathcal{S}}
\newcommand{\TTT}{\mathcal{T}}
\newcommand{\UUU}{\mathcal{U}}
\newcommand{\ZZZ}{\mathcal{Z}}
\newcommand{\+}{\hspace{0.07 em}}
\newcommand{\half}{\frac{1}{2}}
\newcommand{\liminfty}[1][n]{\lim\limits_{#1\rightarrow\infty}}
\newcommand{\seq}[1]{\text{\textsc{Seq}}[#1]}
\newcommand{\seqbig}[1]{\text{\textsc{Seq}}\big[#1\big]}
\newcommand{\seqplus}[1]{\text{\textsc{Seq}}^+[#1]}
\newcommand{\seqfrac}[2][1]{\frac{#1}{1-#2}}
\newcommand{\seqplusfrac}[2][]{\frac{#1#2}{1-#2}}
\newcommand{\av}{\mathsf{Av}}
\newcommand{\gr}{\mathrm{gr}}
\newcommand{\Grid}{\mathsf{Grid}}
\newcommand{\plotptradius}{0.275}
\newcommand{\plotpt}[3][] 
{ \fill[#1,radius=\plotptradius] (#2,#3) circle; }
\newcommand{\plotperm}[3][]  
{
  \foreach \y [count=\x] in {#3} \plotpt[#1]{\x}{\y}
  \draw[thick] (.5,.5) rectangle (#2.5,#2.5);
}
\newcommand{\classD}{\av(\mathbf{4213},\mathbf{2143})}
\newcommand{\classH}{\av(\mathbf{4213},\mathbf{2413},\mathbf{2143})}
\newcommand{\tree}[1]{\text{\textsc{Tree}}[#1]}
\newcommand{\treebig}[1]{\text{\textsc{Tree}}\big[#1\big]}
\newcommand{\GenGCTwoRaw}[2]{
\scalebox{0.9}{$
\begin{array}{|c|}
\hline
\!\!#1\!\! \\
\hline
\!\!#2\!\! \\
\hline
\end{array}
$}
}
\newcommand{\GenGCTwo}[2]{
$
\GenGCTwoRaw{#1}{#2}
$
}
\newcommand{\GenGCTwoAv}[2]{\GenGCTwo{\av(\mathbf{#1})}{\av(\mathbf{#2})}}
\newcommand{\GenGCTwoAvRaw}[2]{\GenGCTwoRaw{\av(\mathbf{#1})}{\av(\mathbf{#2})}}
\newcommand{\GenGCThreeRaw}[3]{
\scalebox{0.9}{$
\begin{array}{|c|c|}
\hline
& \!\!#1\!\! \\
\hline
\!\!#2\!\! & \!\!#3\!\! \\
\hline
\end{array}
$}
}
\newcommand{\GenGCThree}[3]{
$
\GenGCThreeRaw{#1}{#2}{#3}
$
}
\newcommand{\GenGCThreeAv}[3]{\GenGCThree{\av(\mathbf{#1})}{\av(\mathbf{#2})}{\av(\mathbf{#3})}}
\newcommand{\GenGCThreeAvRaw}[3]{\GenGCThreeRaw{\av(\mathbf{#1})}{\av(\mathbf{#2})}{\av(\mathbf{#3})}}
\author{David Bevan\affiliationmark{1}}
\title{The permutation class Av(4213,2143)}
\affiliation{University of Strathclyde, Glasgow, Scotland}
\keywords{permutation class, exact enumeration, asymptotic enumeration}
\begin{document}
\publicationdetails{18}{2017}{2}{10}{1309}
\maketitle

\begin{abstract}
We determine
the structure of permutations avoiding the patterns $\mathbf{4213}$ and $\mathbf{2143}$.
Each such permutation consists of the skew sum of a sequence of plane trees, together with an increasing sequence of points above and an increasing sequence of points to its left.
We use this characterisation to establish the generating function enumerating these permutations.
We also investigate the properties of a typical large permutation in the class
and prove that if a large permutation
that avoids $\mathbf{4213}$ and $\mathbf{2143}$
is chosen uniformly at random, then it is more likely than not to avoid $\mathbf{2413}$ as well.
\end{abstract}

\section{Introduction}

Towards the end of his talk at \emph{Permutation Patterns 2015}, Vince Vatter pointed out that four of the permutation classes
avoiding two patterns of length four
remaining to be enumerated
are subclasses of \emph{generalised grid classes}.
We illustrate the value of this observation by using it to
determine the generating function of the class
$\DDD=\classD$ and to explore the structure of permutations in $\DDD$.

Breaking somewhat with tradition, we assume familiarity with the standard terminology and notation used for permutation classes. See~\cite{BevanPPBasics} for a very brief introduction, or Vatter's survey article~\cite{VatterSurvey} for a much more thorough exposition.
Other overviews of the field can be found in the books by
B\'ona~\cite{Bona2012} and
Kitaev~\cite{Kitaev2011}.

The concept of a generalised grid class was introduced by Vatter in~\cite[Section~2]{Vatter2011}.
A generalised grid class is defined by a matrix $M=(M_{i,j})$, whose entries are permutation classes.
The class $\Grid(M)$ consists of those permutations $\sigma$
whose plot can be partitioned into rectangular cells, one corresponding to each entry of $M$,
such that the subpermutation of $\sigma$ consisting of any points in the cell corresponding to $M_{i,j}$ is a member of the permutation class $M_{i,j}$.
Such a partition is known as a \emph{gridding} of $\sigma$. See Figure~\ref{figGridClassEx} for an illustration. With a slight abuse of notation, we tend to identify a grid class with its defining matrix.
\begin{figure}[ht]
$$
\begin{tikzpicture}[scale=0.225]
\plotperm[blue!50!black]{16}{1,9,12,11,13,7,10,14,8,2,3,15,16,6,4,5}
\draw[thick] (0.5,12.5)--(16.5,12.5);
\draw[thick] (3.5,0.5)--(3.5,16.5);
\end{tikzpicture}
\vspace{-6pt}
$$
\caption{The gridding of a permutation in generalised grid class \protect\GenGCThreeAv{21}{21}{213}}
\label{figGridClassEx}
\end{figure}

We prove (Proposition~\ref{propAinG}) that
$\DDD=\classD$
is a subclass of
the generalised grid class $$\GenGCThreeAvRaw{21}{21}{213}\,,$$
each permutation in $\DDD$ consisting of the skew sum of a sequence of plane trees, with an increasing sequence of points above (\emph{top points})
and an increasing sequence of points to its left (\emph{left points}).
See Figures~\ref{figGridClassEx} and~\ref{figDExample} for illustrations.
By analysing the structure of certain ``canonical'' griddings, we establish (Theorem~\ref{thmAgf}) that
the generating function enumerating $\DDD$ is
$$
D(z) \;=\;
\frac{(1-2\+z) \big({-}1+5\+z-7\+z^2+2\+z^3 +(1-z) \sqrt{1-6\+z+5\+z^2}\big)}{1-10\+z+24\+z^2-20\+z^3+4\+z^4} .
$$
We also investigate the properties of a typical large permutation in $\DDD$ and determine (Corollary~\ref{corD}) that,
asymptotically, one fifth of the points in a permutation in $\DDD$ are top points, and that
more than half of the permutations in $\DDD$ have a gridding with no left points and hence contain no occurrences of $\mathbf{2413}$.

In the rest of this introductory section, we present various definitions, notation and results that we use later.
In Section~\ref{sectH}, we investigate the structure of $\classH$, a subclass of $\DDD$ first studied by Bloom \& Burstein~\cite{BB2014}.
Then, in Section~\ref{sectD}, we build on our analysis of this subclass to
explore the structure of permutations in $\DDD$ and
prove 
our main results.
We conclude with some questions.

To establish generating functions, we make use of the \emph{symbolic method} as presented in Flajolet \& Sedge\-wick~\cite[Chapters I and III]{FS2009}.
We use $\ZZZ$ to denote the atomic class consisting of a single point in a permutation,
$\seq{\CCC}$ to represent a possibly empty sequence of elements of $\CCC$ and
$\seqplus{\CCC}$ to represent a non-empty sequence of elements of $\CCC$.

We also use $\tree{\CCC}$ to denote the class of non-empty plane trees whose vertices are drawn from $\CCC$, defined by the structural equations
$$
\tree{\CCC} \;=\; \CCC \times \seqbig{\tree{\CCC}} \qquad{\text or}\qquad \tree{\CCC} \;=\; \CCC + \tree{\CCC}^2.
$$
We make repeated use of the fact that any permutation $\tau\in\av(\mathbf{213})$ consists of the skew sum of a sequence of plane trees, and thus $\av(\mathbf{213}) \cong \seq{\tree{\ZZZ}}$.
This is made evident by constructing the \emph{Hasse graph} of $\tau$, the graph corresponding to the Hasse diagram of the poset on the points of $\tau$ in which $u<v$ if $u$ is to the lower left of $v$.
See the lower cell in Figure~\ref{figHExample} and the lower right cell in Figure~\ref{figDExample} for illustrations.
Hasse graphs of permutations were introduced by 
Bousquet-M\'elou \& Butler in~\cite{BMB2007}, and previously used by the present author in~\cite{BevanAv1324GR} and~\cite{BevanTwoClasses}.

We make use of the following terminology.
The uppermost (and leftmost) tree in such a skew sum is the \emph{upper tree}.
In each tree, the left to right maxima (the vertices in the leftmost maximal path from the root) constitute the \emph{trunk}, the uppermost point of which is its \emph{tip}.
The \emph{uppermost branching point} in the trunk of a tree is the uppermost point in the trunk of degree greater than~2.

To conclude this introductory section, we state three general propositions of analytic combinatorics that we require.
Recall that $\big[z^n\big]f(z)$ denotes the coefficient of $z^n$ in $f(z)$.

Proposition~\ref{propAsympt} enables us to determine the complete asymptotics of $\big[z^n\big]f(z)$ by repeated application
to successive terms of the (Puiseux) expansion for $f(z)$ around its dominant singularity, and Proposition~\ref{propMoments} can be used to determine the mean and standard deviation of some parameter.

\begin{propO}[{Flajolet \& Odlyzko~\cite{FO1990}; see~\cite[Theorem~VI.1]{FS2009}}]\label{propAsympt}
  The coefficient of $z^n$ in $\lambda\+(1-z/\rho)^\alpha$ admits for large $n$ the following asymptotic expansion:
$$
\big[z^n\big]\lambda\+(1-z/\rho)^\alpha \;\sim\; \frac{\lambda}{\Gamma(-\alpha)} \+ \rho^n \+ n^{-\alpha-1} \+ \bigg(\!1 \,+\, \sum_{k=1}^\infty \frac{e_k}{n^k}\bigg),
$$
where
$$
e_k \;=\; \sum_{\ell=k}^{2k} \+\lambda_{k,\ell} \prod_{j=1}^\ell (\alpha+j)
\qquad
\text{and}
\qquad
\lambda_{k,\ell} \;=\; \big[v^k t^\ell\big]e^t\+(1+v\+t)^{-1-1/v}.
$$
\end{propO}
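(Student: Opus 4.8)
The plan is to reduce to the standard function $(1-z)^\alpha$ and then extract its coefficients by a Hankel-contour integral, an approach that produces the generating function $e^t(1+vt)^{-1-1/v}$ directly rather than as an afterthought. First I would use the change of variable $z\mapsto z/\rho$ together with homogeneity in $\lambda$ to reduce to the case $\rho=\lambda=1$, reinstating both factors at the very end. Writing the coefficient by Cauchy's formula as $[z^n](1-z)^\alpha=\frac{1}{2\pi i}\oint (1-z)^\alpha z^{-n-1}\,dz$, I would deform the contour to a Hankel contour $\HHH$ that comes in from the right, loops around the singularity at $z=1$, and returns, while remaining in the region of analyticity.

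On $\HHH$ I would substitute $z=1+t/n$, so that $dz=dt/n$, the factor $(1-z)^\alpha$ becomes $(-t/n)^\alpha$ and contributes the global scale $n^{-\alpha-1}$, while $z^{-n-1}=(1+t/n)^{-n-1}=(1+vt)^{-1-1/v}$ with $v=1/n$. The decisive step is the factorization $(1+vt)^{-1-1/v}=e^{-t}\cdot\big[e^t(1+vt)^{-1-1/v}\big]$, in which the bracketed factor is exactly the bivariate generating function appearing in the statement. Expanding it as $\sum_{k,\ell}\lambda_{k,\ell}\,v^k t^\ell$ and setting $v=1/n$ turns the integrand into $e^{-t}t^\alpha\sum_{k,\ell}\lambda_{k,\ell}\,n^{-k}t^\ell$; inspecting $-(1+1/v)\log(1+vt)$ shows that the coefficient of $v^k$ is a polynomial in $t$ whose degree runs from $k$ to $2k$, which accounts for the summation range $\ell=k,\dots,2k$.

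Interchanging summation and integration, each term contributes $\frac{1}{2\pi i}\oint_\HHH e^{-t}t^{\alpha+\ell}\,dt$, which Hankel's representation of the reciprocal Gamma function evaluates to $1/\Gamma(-\alpha-\ell)$. The functional equation of $\Gamma$ then gives $1/\Gamma(-\alpha-\ell)=\frac{(-1)^\ell}{\Gamma(-\alpha)}\prod_{j=1}^\ell(\alpha+j)$, and collecting the powers of $1/n$ (after matching the residual signs against the $(-t/n)^\alpha$ factor) yields precisely $e_k=\sum_{\ell=k}^{2k}\lambda_{k,\ell}\prod_{j=1}^\ell(\alpha+j)$, along with the leading constant $1/\Gamma(-\alpha)$.

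The main obstacle is not the algebra above but the analysis needed to make the termwise integration rigorous: one must bound the tails of the Hankel contour and control the transition region where $z=1+t/n$ leaves a fixed neighbourhood of the singularity, so that the formal series becomes a genuine asymptotic expansion with a remainder of the correct order. This is the standard but delicate part of singularity analysis. As an alternative to the contour method I could instead use $[z^n](1-z)^\alpha=\Gamma(n-\alpha)/\big(\Gamma(-\alpha)\,n!\big)$ and expand the Gamma-function ratio via Stirling's series; this reaches the same asymptotic scale readily, but recovering the specific closed form $\lambda_{k,\ell}=[v^k t^\ell]e^t(1+vt)^{-1-1/v}$ is far more transparent through the Hankel integral, so I would favour the contour approach.
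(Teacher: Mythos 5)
The paper does not prove this proposition at all---it quotes it from Flajolet and Odlyzko (see Theorem~VI.1 of Flajolet--Sedgewick)---and your Hankel-contour argument is essentially the proof given in that cited source: the substitution $z=1+t/n$, the factorisation $(1+t/n)^{-n-1}=e^{-t}\,\big[e^t(1+vt)^{-1-1/v}\big]_{v=1/n}$ whose bracketed factor generates the $\lambda_{k,\ell}$, and termwise integration via Hankel's representation of $1/\Gamma$, with your sign bookkeeping correct (the $(-1)^\ell$ arising from $t^\ell=(-1)^\ell(-t)^\ell$ cancels the $(-1)^\ell$ in $1/\Gamma(-\alpha-\ell)=(-1)^\ell\prod_{j=1}^\ell(\alpha+j)\,/\,\Gamma(-\alpha)$, giving exactly the stated $e_k$). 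One incidental remark: your rescaling $z\mapsto z/\rho$ produces a factor $\rho^{-n}$, not $\rho^{n}$, which exposes a typo in the proposition as printed; the paper's own later applications, which extract growth $5^n$ from $\rho=\tfrac{1}{5}$, confirm that $\rho^{-n}$ is intended.
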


\begin{propO}[{\cite[Proposition III.2]{FS2009}}]\label{propMoments}
Suppose that $A(z,x)$ is the bivariate generating function for some combinatorial class, in which $z$ marks size and $x$ marks the value of a parameter $\xi$.
Then the mean
and
variance
of $\xi$ for elements of size $n$ are given by
$$
\mathbb{E}_n[\xi] \;=\; \frac{\big[z^n\big]\+ \partial_x A(z,x)\big|_{x=1}}{\big[z^n\big]\+ A(z,1)}
\qquad
\text{and}
\qquad
\mathbb{V}_n[\xi]
\;=\;
\frac{\big[z^n\big]\+ \partial^2_x A(z,x)\big|_{x=1}}{\big[z^n\big]\+ A(z,1)}
\:+\: \mathbb{E}_n(\xi)
\:-\: {\mathbb{E}_n(\xi)}^2
$$
respectively.
\end{propO}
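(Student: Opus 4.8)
The plan is to work directly from the combinatorial interpretation of the bivariate generating function and then to recognise the two derivative evaluations as scaled factorial moments. First I would write $A(z,x) = \sum_{n \geq 0} \sum_{k \geq 0} a_{n,k}\, z^n x^k$, where $a_{n,k}$ denotes the number of objects of size $n$ for which the parameter $\xi$ takes the value $k$. Then the total number of objects of size $n$ is $\big[z^n\big] A(z,1) = \sum_k a_{n,k}$, and under the uniform distribution on objects of size $n$ the probability that $\xi = k$ equals $a_{n,k} / \sum_j a_{n,j}$. With this in hand both identities reduce to extracting coefficients from the appropriate $x$-derivatives.

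For the mean I would differentiate once in $x$ and evaluate at $x=1$: since $\partial_x A(z,x)\big|_{x=1} = \sum_{n} \big(\sum_k k\, a_{n,k}\big) z^n$, extracting the coefficient of $z^n$ gives $\sum_k k\, a_{n,k}$, and dividing by $\big[z^n\big]A(z,1)$ yields exactly $\mathbb{E}_n[\xi] = \sum_k k\, a_{n,k} / \sum_k a_{n,k}$, as claimed. This step is immediate once the bookkeeping above is set up.

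For the variance the key observation is that the second $x$-derivative produces a falling factorial rather than a square, so that $\partial_x^2 A(z,x)\big|_{x=1} = \sum_n \big(\sum_k k(k-1)\, a_{n,k}\big) z^n$. Consequently the first term of the stated formula equals the second \emph{factorial} moment $\mathbb{E}_n[\xi(\xi-1)] = \mathbb{E}_n[\xi^2] - \mathbb{E}_n[\xi]$; adding the correction $+\,\mathbb{E}_n[\xi] - \mathbb{E}_n[\xi]^2$ then telescopes to $\mathbb{E}_n[\xi^2] - \mathbb{E}_n[\xi]^2 = \mathbb{V}_n[\xi]$, the desired identity. There is no genuine obstacle here: the only subtlety worth flagging is precisely this passage from the factorial moment (which is what $\partial_x^2$ naturally delivers) to the ordinary second moment, and the two extra terms in the formula exist exactly to compensate for it. I would also remark in passing that the whole argument is formal at the level of power series, so no convergence or analyticity hypotheses on $A$ are required for the identities to hold.
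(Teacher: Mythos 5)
Your proof is correct. The paper itself offers no proof of this proposition---it is quoted as a known result from Flajolet \& Sedgewick \cite[Proposition III.2]{FS2009}---and your argument (coefficient extraction from the double series $\sum_{n,k} a_{n,k} z^n x^k$, recognition of $\partial_x^2 A(z,x)\big|_{x=1}$ as generating the second \emph{factorial} moment, and the identity $\mathbb{V}_n[\xi] = \mathbb{E}_n[\xi(\xi-1)] + \mathbb{E}_n[\xi] - \mathbb{E}_n[\xi]^2$) is precisely the standard proof given in that reference, valid purely at the level of formal power series as you note.
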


Proposition~\ref{propGaussian}, which is a consequence of Hwang's Quasi-powers Theorem~\cite{Hwang1998}, allows us to establish that a parameter is asymptotically normally distributed from the fact that its {\small PGF} behaves nearly like a large power of a fixed function.

\begin{propO}[{\cite[Theorem~IX.12]{FS2009}}]\label{propGaussian}
Let $F(z,x)$ be a bivariate function, analytic at $(0,0)$ and with non-negative Taylor coefficients, and
let $\xi_n$ be the sequence of random variables with probability generating functions
$$
\frac{\big[z^n\big]F(z,x)}{\big[z^n\big]F(z,1)} .
$$
If the following conditions hold:
\begin{bullets}
\item[(i)]
There exist positive $r$ and $\varepsilon$ and functions $A$, $B$ and $C$, analytic in $\big\{(z,x):|z|\leqslant r,|x-1|\leqslant\varepsilon\big\}$, such that
$F(z,x)=A(z,x)+B(z,x)C(z,x)^\alpha$ for some $\alpha\notin\mathbb{N}_0$.
\item[(ii)] The equation $C(z,1)=0$ has a unique root, $\rho$, with $|\rho|\leqslant r$, $\rho$ is a simple root, $B(\rho,1)\neq0$, $\partial_z C(\rho,1)\neq0$  and $\partial_x C(\rho,1)\neq0$.
\end{bullets}
Then, as long as its asymptotic variance is non-zero, $\xi_n$
converges in law to a Gaussian distribution.
\end{propO}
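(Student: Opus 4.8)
The plan is to derive this from the general quasi-powers machinery, by showing that the probability generating functions
$$
p_n(x) \;=\; \frac{\big[z^n\big]F(z,x)}{\big[z^n\big]F(z,1)}
$$
admit a uniform quasi-powers approximation of the shape $p_n(x)\sim \frac{c(x)}{c(1)}\,\beta(x)^n$ on a complex neighbourhood of $x=1$, where $\beta(x)=\rho/\rho(x)$ and $\rho(x)$ is a smoothly moving dominant singularity, and then invoking Hwang's quasi-powers theorem~\cite{Hwang1998}.

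First I would track the singularity as $x$ varies. By hypothesis (ii), $C(z,1)$ has a unique simple root $\rho$ in $|z|\leqslant r$ with $\partial_z C(\rho,1)\neq 0$. Applying the analytic implicit function theorem to $C(z,x)=0$ at the point $(\rho,1)$ produces an analytic branch $x\mapsto\rho(x)$ with $\rho(1)=\rho$ and $C(\rho(x),x)\equiv 0$, satisfying $\rho'(1)=-\partial_x C(\rho,1)/\partial_z C(\rho,1)$, which is non-zero precisely because $\partial_x C(\rho,1)\neq 0$. Shrinking $r$ and $\varepsilon$ if necessary, I would then argue that $\rho(x)$ remains the \emph{unique} dominant singularity of $z\mapsto F(z,x)$ throughout this neighbourhood: the terms $A$ and $B$ are analytic on $|z|\leqslant r$ and so contribute nothing singular there, and one must rule out any further zero of $C$ of modulus $\leqslant|\rho(x)|$ migrating into the disc. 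This separation of the dominant singularity is the step I expect to be the main obstacle, since everything downstream relies on the singularity analysis being valid \emph{uniformly} in the parameter $x$.

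Second, I would carry out singularity analysis uniformly in $x$. Near $z=\rho(x)$ we have $C(z,x)=\partial_z C(\rho(x),x)\,(z-\rho(x))\,(1+o(1))$, so writing $z-\rho(x)=-\rho(x)\,(1-z/\rho(x))$ the singular part of $F$ becomes $B(\rho(x),x)\,\big[{-}\rho(x)\,\partial_z C(\rho(x),x)\big]^{\alpha}\,(1-z/\rho(x))^{\alpha}$ up to analytic corrections. Since $\alpha\notin\mathbb{N}_0$ this is a genuine branch singularity, and the transfer theorem (Proposition~\ref{propAsympt}, read with $x$ as a parameter) yields
$$
\big[z^n\big]F(z,x) \;\sim\; \frac{c(x)}{\Gamma(-\alpha)}\,\rho(x)^{-n}\,n^{-\alpha-1},
$$
where $c(x)$ is an explicit analytic function, non-vanishing near $x=1$; in particular $c(1)\neq 0$ uses $B(\rho,1)\neq 0$. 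The delicate point is again uniformity: the error terms in the transfer theorem must hold uniformly for $x$ in a fixed neighbourhood of $1$, which is exactly what the uniformly dominant, uniformly simple branch point $\rho(x)$ secures.

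Dividing the estimates at $x$ and at $x=1$ then gives $p_n(x)\sim \frac{c(x)}{c(1)}\,\beta(x)^n$ with $\beta(x)=\rho/\rho(x)$ and $\beta(1)=1$, uniformly near $x=1$. Finally I would apply Hwang's quasi-powers theorem: the growth factor $\log\beta(x)$ is analytic and vanishes at $x=1$, so $\xi_n$ has mean $\sim\mu n$ and variance $\sim\sigma^2 n$ with $\mu=\beta'(1)$ and $\sigma^2=\beta''(1)+\beta'(1)-\beta'(1)^2$, and, provided the variability condition $\sigma^2\neq 0$ holds --- which is precisely the stated hypothesis that the asymptotic variance is non-zero --- the normalised variable $(\xi_n-\mu n)/\sqrt{\sigma^2 n}$ converges in law to the standard Gaussian.
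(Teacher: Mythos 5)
This proposition is not proved in the paper: it is quoted, with citation, as Theorem~IX.12 of Flajolet \& Sedgewick~\cite{FS2009}, whose proof is precisely the argument you outline. Your sketch --- tracking the moving singularity $\rho(x)$ via the analytic implicit function theorem, ruling out other zeros of $C$ by shrinking $r$ and $\varepsilon$, performing singularity analysis uniformly in $x$ to obtain the quasi-powers form $p_n(x)\sim\big(c(x)/c(1)\big)\big(\rho/\rho(x)\big)^n$, and then invoking Hwang's theorem~\cite{Hwang1998} under the non-zero-variance hypothesis --- is correct and is essentially the same route as the cited source, including your identification of uniformity of the transfer step as the delicate point.
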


\section{Av(4213,2413,2143)}\label{sectH}

Let us begin by considering a subclass of $\DDD$,
the permutation class $\classH$.
First, we present a characterisation of the class, which we then use to establish its generating function and determine some asymptotic properties.

This class was first investigated 
by Bloom \& Burstein~\cite{BB2014}.
Although they don't explicitly make use of the notion of a generalised grid class, they prove a result equivalent to our first proposition.

\begin{propO}[{\cite[Lemma~1.5]{BB2014}}]\label{propH}
  $\classH$ is
  the generalised grid class $\HHH=\,\GenGCTwoAv{21}{213}\!\!$.
\end{propO}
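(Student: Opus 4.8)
The plan is to prove the two inclusions $\HHH\subseteq\classH$ and $\classH\subseteq\HHH$ separately, exploiting the observation that the three forbidden patterns $\mathbf{4213}$, $\mathbf{2413}$ and $\mathbf{2143}$ are precisely the permutations obtained by inserting a new largest element into $\mathbf{213}$ at one of its first three positions; inserting it at the fourth position would yield the permitted pattern $\mathbf{2134}$. Thus $\sigma\in\classH$ exactly when, for every occurrence of $\mathbf{213}$ in $\sigma$ at positions $i_1<i_2<i_3$, no point of $\sigma$ with value greater than $\sigma(i_3)$ lies to the left of position $i_3$. I would record this reformulation of the basis first, since it drives both directions.

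For the inclusion $\HHH\subseteq\classH$, I would use the fact that a generalised grid class is closed downwards, so it suffices to check that none of $\mathbf{4213}$, $\mathbf{2413}$, $\mathbf{2143}$ admits a gridding of the required shape. Since $\HHH$ is the $2\times1$ column grid class whose top cell is $\av(\mathbf{21})$ (an increasing sequence) and whose bottom cell is $\av(\mathbf{213})$, a gridding corresponds to a choice of horizontal threshold splitting the values. Running through the few possible thresholds for each of these length-four patterns, and observing that in every case either the top points fail to be increasing or the bottom points contain $\mathbf{213}$, completes this direction.

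The substantive direction is $\classH\subseteq\HHH$, and here I would construct a \emph{canonical} gridding by taking the top cell as large as possible: let $t$ be the least value such that the points of $\sigma$ with value at least $t$ form an increasing subsequence, declare these the top points, and let the bottom consist of the points with value below $t$. The top cell is then increasing by construction, so everything reduces to showing that the bottom avoids $\mathbf{213}$. Suppose, for contradiction, that the bottom contains an occurrence of $\mathbf{213}$, say points with values $b<a<c<t$ at positions $i_1<i_2<i_3$. By the reformulation, every point of value greater than $c$ lies strictly to the right of $i_3$, so the point of value $c$ is the leftmost among all points of value at least $c$. Since $c<t$, the points of value at least $c$ do not form an increasing subsequence, and as the value-$c$ point is leftmost this failure must arise from an inversion among the points of value greater than $c$: there are points of values $u$ and $v$ with $c<u<v$, the point of value $v$ lying to the left of the point of value $u$, both to the right of $i_3$. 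The points of values $a$, $b$, $v$, $u$, read in position order, then form an occurrence of $\mathbf{2143}$, contradicting $\sigma\in\classH$.

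The main obstacle is exactly this last step: isolating the right canonical choice of gridding line and verifying that a surviving bottom $\mathbf{213}$ is incompatible with the avoidance hypothesis. The reformulation of the basis in terms of inserting a largest element into $\mathbf{213}$ is what makes the bookkeeping manageable, both for the finite closure check in the first direction and for locating the forbidden $\mathbf{2143}$ in the second.
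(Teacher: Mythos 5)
Your proof is correct and takes essentially the same approach as the paper: the identical canonical gridding (top cell the maximal set of largest values forming an increasing subsequence), and the identical contradiction, in which a surviving $\mathbf{213}$ in the bottom cell combines with an inversion among larger values to its right to produce a forbidden $\mathbf{2143}$. The only cosmetic differences are that you package the pattern inspection as an explicit reformulation (inserting a new maximum into $\mathbf{213}$) and invoke downward closure plus a finite check for the easy inclusion, and that you obtain the needed inversion abstractly from the minimality of $t$, where the paper uses the concrete pair $k$, $k-1$.
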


\begin{figure}[ht]
\vspace{6pt}
$$
\begin{tikzpicture}[scale=0.25]
\plotperm[blue!50!black]{4}{4,2,1,3}
\draw[gray] (0.5,3.5)--(4.5,3.5);
\node[] at (2.5,-0.5) {$\mathbf{4213}$};
\end{tikzpicture}
\qquad\quad\qquad
\begin{tikzpicture}[scale=0.25]
\plotperm[blue!50!black]{4}{2,4,1,3}
\draw[gray] (0.5,3.5)--(4.5,3.5);
\node[] at (2.5,-0.5) {$\mathbf{2413}$};
\end{tikzpicture}
\qquad\quad\qquad
\begin{tikzpicture}[scale=0.25]
\plotperm[blue!50!black]{4}{2,1,4,3}
\draw[gray] (0.5,3.5)--(4.5,3.5);
\node[] at (2.5,-0.5) {$\mathbf{2143}$};
\end{tikzpicture}
\vspace{-9pt}
$$
\caption{The basis of class $\HHH$}
\label{figHBasis}
\end{figure}

\begin{proof}
Let $B=\{\mathbf{4213},\mathbf{2413},\mathbf{2143}\}$.
Suppose we have a gridding of a permutation $\pi\in\HHH$,
and that $\beta\in B$.
Inspection of Figure~\ref{figHBasis} shows that no more than one point of an occurrence of $\beta$ in $\pi$ can occur in the upper cell, or else it would contain a $\mathbf{21}$, and
neither can three or more points of $\beta$ occur in the lower cell, or else it would contain a $\mathbf{213}$.
Thus $\HHH$ is a subclass of $\av(B)$.

On the other hand, suppose $\sigma$ is a permutation in $\av(B)$ of length $n$, and let $\tau$ be the maximal increasing subsequence
of $\sigma$ of the form $(k,k+1,\ldots,n-1,n)$ for some $k\leqslant n$.
Note that if $k>1$, then, by the maximality of $\tau$, the term $k-1$ occurs to the right of $k$ in $\sigma$.

We claim that $\sigma$
can be gridded in $\HHH$ by placing the points of $\tau$ in the upper cell, and any remaining points in the lower cell.
Clearly, $\tau$ avoids $\mathbf{21}$.
Suppose there was an occurrence of $\mathbf{213}$ in the lower cell. Inspection of Figure~\ref{figHBasis} shows that, since $\sigma$ avoids elements of $B$, the occurrence of $\mathbf{213}$ must occur to the left of all the points in the upper cell. However, in this case, the leftmost two points of this $\mathbf{213}$ together with $k$ and $k-1$ would form a $\mathbf{2143}$, which is impossible because $\sigma$ avoids $\mathbf{2143}$.
Thus $\av(B)$ is a subclass of $\HHH$.
\end{proof}

Proposition~\ref{propH} can also be deduced from Atkinson~\cite[Theorem~2.2]{Atkinson1999},
which specifies the structure of basis elements of juxtapositions of finitely based permutation classes.

Bloom \& Burstein also enumerate $\HHH$. We present our own proof here so that we can make use of it later when considering class $\DDD$.
\begin{propO}[{\cite[Lemma~1.6]{BB2014}}]\label{propHgf}
The generating function for $\HHH=\classH$ is
$$
H(z) \;=\;
\frac{1-3\+ z+2\+ z^2-\sqrt{1-6\+ z+5\+ z^2}}{2\+ z\+(2-z) } . 
$$
\end{propO}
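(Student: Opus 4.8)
The plan is to reduce to counting a canonical form and then apply the symbolic method. By Proposition~\ref{propH}, $\HHH$ is the grid class with an increasing (that is, $\av(\mathbf{21})$) cell stacked above an $\av(\mathbf{213})$ cell, and the proof of that proposition already exhibits a \emph{canonical} gridding of each $\sigma\in\HHH$: the top cell receives the maximal increasing run $(k,k{+}1,\ldots,n)$ of largest values, with everything else going to the bottom cell. Since this gridding is uniquely determined by $\sigma$, it gives a bijection between $\HHH$ and the set of canonically gridded permutations, so it suffices to enumerate the latter. First I would record the structural description of such a gridding: the bottom cell is a member of $\av(\mathbf{213})\cong\seq{\tree{\ZZZ}}$, a skew sum of a sequence of plane trees; the top cell is an increasing sequence of points lying above everything; and the maximality of the run translates into the single geometric condition that the leftmost top point lies strictly to the left of the topmost bottom point, which is the tip of the trunk of the upper tree.

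The heart of the argument is to convert this picture into a specification to which the symbolic method applies. I would decompose along the trunk of the upper tree using its uppermost branching point: above that point the trunk is a bare path, whose vertices (and the column gaps adjacent to them) may carry inserted top points and hanging subtrees, while at the branching point the object splits into two independent pieces, each again a (shifted) object in $\HHH$. This binary split is what produces an $H(z)^2$ term; the free lower trees of the bottom cell contribute the $\av(\mathbf{213})$-type factors, and the increasing sequence of top points contributes a factor $1/(1-z)$, with the canonicity pin — that the first top point must sit to the left of the tip — restricting the admissible interleavings and accounting for the remaining rational factors. Assembling these pieces, I expect an algebraic equation that reduces to the quadratic
\[
 z\+(2-z)\+H(z)^2 \;-\; (1-z)(1-2\+z)\+H(z) \;+\; z\+(1-z) \;=\; 0 .
\]
Solving this and selecting the branch analytic at the origin (so that $H(z)=z+O(z^2)$) yields the stated closed form: the discriminant simplifies as $(1-z)^2(1-2\+z)^2-4\+z^2(2-z)(1-z)=(1-z)(1-5\+z)=1-6\+z+5\+z^2$, producing the surd $\sqrt{1-6\+z+5\+z^2}$, and $(1-z)(1-2\+z)=1-3\+z+2\+z^2$ supplies the rational part. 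As a final sanity check I would confirm the first few coefficients $1,2,6,21,\ldots$ against direct enumeration of $\classH$.

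The step I expect to be the main obstacle is exactly this translation of canonicity into a local recursive specification. The requirement ``leftmost top point to the left of the tip'' is a single global constraint relating one distinguished top point to one distinguished bottom point, and the real work is to pin down precisely how the increasing top sequence may interleave with the trunk and its hanging subtrees so that this holds, without double counting or omitting cases. Once the correct specification is in hand, deriving the quadratic, choosing the right root, and simplifying the surd are all routine.
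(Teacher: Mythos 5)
Your framework is legitimate and your closing algebra is correct, but the core of the proof is missing --- and you say so yourself. You choose the gridding from Proposition~\ref{propH} (maximal suffix run $(k,\ldots,n)$ on top) as canonical, and your geometric characterisation of it is right: when the lower cell is nonempty, the leftmost top point must lie strictly left of the tip of the upper tree's trunk. The quadratic $z\+(2-z)\+H^2-(1-z)(1-2\+z)\+H+z\+(1-z)=0$ is indeed satisfied by the stated closed form, the discriminant computation is correct, and the branch selection is right. But nothing in the proposal derives that quadratic from the combinatorics: the step you defer as ``the main obstacle'' and ``the real work'' is precisely the content of the proof. Worse, the hints you give for it are shaky. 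Vertices of the trunk strictly above the uppermost branching point cannot ``carry hanging subtrees'' --- having no subtrees there is exactly what makes that vertex the uppermost branching point. And the claim that at the branching point the object ``splits into two independent pieces, each again a (shifted) object in $\HHH$'' is unjustified: the branching point may root any number of subtrees, and a subtree decorated with top points is not an $\HHH$-object (it carries no canonicity pin of its own), so the $H^2$ term has no combinatorial warrant. As it stands, the quadratic is a formula verified against the known answer, not a derivation of it.

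It is worth seeing why the paper's route avoids your obstacle: it takes the \emph{opposite} canonical gridding, the one with the \emph{minimal} number of top points. Minimality makes canonicity a local, universal condition --- every top point lies strictly right of, and not adjacent to, the tip of the upper tree's trunk --- and in a specification where each top point is attached to the lower-cell vertex immediately to its left, this says simply that trunk vertices of the upper tree carry no top points. That is directly encodable: trees with decorations are $\TTT=\treebig{\ZZZ\times\seq{t\+\ZZZ}}$, the upper tree is either a bare path or a trunk-plus-branching-point construction $\UUU=\PPP+\seqplus{\ZZZ}\times(\ZZZ\times\seqplus{\TTT})\times\seqbig{\ZZZ\times\seq{\TTT}}$, and $\HHH=\UUU\times\seq{\TTT}$; the algebra then closes through the tree equation, with no ad hoc $H^2$ needed. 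Your maximal-top-point convention instead turns canonicity into an \emph{existential} condition --- at least one top point must interleave with the trunk of the upper tree --- which cannot be imposed vertex-by-vertex; you would need either to track the trunk length of the upper tree with an extra variable or to run an inclusion--exclusion over all griddings. Either is doable, but it is exactly the work you have left out, and until it is written down and translated into functional equations the proof is incomplete.
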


\begin{figure}[ht]
$$
\begin{tikzpicture}[scale=0.25]
\path [fill=blue!15] (6.8,0.5) rectangle (7.2,21.5);
\path [fill=blue!15] (10.8,0.5) rectangle (11.2,21.5);
\path [fill=blue!15] (11.8,0.5) rectangle (12.2,21.5);
\path [fill=blue!15] (15.8,0.5) rectangle (16.2,21.5);
\path [fill=blue!15] (17.8,0.5) rectangle (18.2,21.5);
\path [fill=blue!15] (20.8,0.5) rectangle (21.2,21.5);
\draw[thick,dashed] (13.5,0.5)--(13.5,21.5);
\draw[thick,dashed] (16.5,0.5)--(16.5,21.5);
\draw[thick] (20,2)--(17,1)--(19,3);
\draw[thick] (14,4)--(15,5);
\draw[thick] (1,6)--(2,9)--(3,10)--(4,14)--(5,15);
\draw[thick] (10,8)--(1,6)--(13,7);
\draw[thick] (9,11)--(3,10)--(6,12)--(8,13);
\plotperm[blue!50!black]{21}{6,9,10,14,15,12,16,13,11,8,17,18,7,4,5,19,1,20,3,2,21}
\draw[thick] (0.5,15.5)--(21.5,15.5);
\end{tikzpicture}
\vspace{-6pt}
$$
\caption{The canonical gridding of a permutation in $\HHH$}
\label{figHExample}
\end{figure}

\newcommand{\TsupT}{\TTT}
\newcommand{\tsupT}{T}
\newcommand{\Ttop}{\UUU}
\newcommand{\ttop}{U}
\newcommand{\Pleft}{\LLL}
\newcommand{\pleft}{L}
\newcommand{\Psplit}{\QQQ}
\newcommand{\psplit}{Q}
\newcommand{\Tsplit}{\SSS}
\newcommand{\tsplit}{S}

\begin{proof}
For each permutation $\sigma$ in $\HHH$, we consider the gridding of $\sigma$ in which the number of points in the upper cell is {minimal}
(see Figure~\ref{figHExample} for an illustration).
We call this the \emph{canonical} gridding of $\sigma$ in $\HHH$, and refer to the points in the top cell as the \emph{top points}.
Clearly, there is a bijection between permutations in $\HHH$ and their canonical griddings.

The minimality of the set of top points implies that they all occur to the right of the first occurrence of a $\mathbf{21}$ in the lower cell.
In other words, the leftmost top point must be to the right of the (tip of the) trunk of the upper tree and not adjacent to it.

We use the symbolic method to construct the class, marking top points with $t$ for clarity (although we don't actually need an extra catalytic variable). The structural equations are as follows:
\begin{align}
  \PPP & \;=\;  \seqplus{\ZZZ}  \label{eqTpath} \\[5pt]
  \TsupT & \;=\;  \treebig{\ZZZ \times \seq{t\+\ZZZ}}  \label{eqTr} \\[5pt]
  \Ttop & \;=\;  \PPP \:\;+\;\: \seqplus{\ZZZ} \times (\ZZZ \times \seqplus{\TsupT}) \times \seqbig{\ZZZ \times \seq{\TsupT}}  \label{eqTtop} \\[5pt]
  \HHH & \;=\;  \Ttop \:\times\: \seq{\TsupT}  \label{eqH}
\end{align}
     In~\eqref{eqTpath},
     we use $\PPP$ to denote
     a tree consisting only of a trunk (a \emph{path} tree).

     In~\eqref{eqTr},
     we use $\TsupT$ to denote
     a tree 
     together with any top points that occur to the right of some of its vertices.
     In Figure~\ref{figHExample}, the dashed lines illustrate how the top points are associated with the trees.

In~\eqref{eqTtop},
      we use $\Ttop$ to denote
      the upper tree, together with its top points. This is either a path tree (with no top points), or else is constructed (tip downwards) from 
      \begin{bullets}
        \item[(i)]   at least one point on the trunk above the uppermost branching point,
        \item[(ii)]  the uppermost branching point on the trunk, rooting at least one subtree, which may have top points to the right of some of its vertices, and
        \item[(iii)] any points on the trunk below the uppermost branching point, each possibly rooting a subtree that may have top points to the right of some of its vertices.
      \end{bullets}

Finally, in~\eqref{eqH}, the class $\HHH$ is constructed from the upper tree, $\Ttop$, and zero or more $\TsupT$ trees.

Translation into functional equations yields:
\begin{align*}
  \tsupT(z,t)   & \;=\;  \frac{z}{(1-t\+z)(1-\tsupT(z,t))} ,
  \\[6pt]
  \ttop(z,t) & \;=\;  \seqplusfrac{z} \:\;+\;\: \seqplusfrac{z} \times \seqplusfrac[z\+]{\tsupT(z,t)} \times \seqfrac{\seqfrac[z]{\tsupT(z,t)}} ,
  \\[6pt]
  H(z,t)     & \;=\;   \seqfrac[\ttop(z,t)]{\tsupT(z,t)} .
\end{align*}
Solving, expanding, simplifying and rearranging gives
$$
H(z,t) \;=\;
\frac{1 - (t+2)\+z + 2\+t\+z^2 - \sqrt{1 - (2\+t+4)\+z + t\+(t+4)\+z^2}}{2\+z\+(t+1-t\+z) } .
$$
The univariate generating function then results from setting $t=1$.
\end{proof}

From the generating function, we can determine the following asymptotic behaviour.
\begin{corO}\label{corH}
Asymptotically, $|\HHH_n| \sim \frac{5}{18}\sqrt{\frac{5}{\pi}}\+5^n\+n^{-3/2}$ and the 
number of top points in the canonical gridding of a permutation in $\HHH_n$
is normally distributed with mean $\frac{n}{5}$ and standard deviation $\frac{2\sqrt{n}}{5}$.
\end{corO}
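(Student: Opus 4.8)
The plan is to read off both the counting asymptotics and the distributional statement from a singularity analysis of the generating functions $H(z)$ and $H(z,t)$ obtained in the proof of Proposition~\ref{propHgf}, so the first task is to locate and describe the dominant singularity. In $H(z)$ the only branch points come from the radicand $1-6z+5z^2=(1-z)(1-5z)$, whose zeros are $z=1$ and $z=\tfrac15$; the denominator $2z(2-z)$ contributes a possible pole only at $z=2$ (the apparent pole at $z=0$ is removable, since $H$ is a generating function). Hence the dominant singularity is the square-root branch point $\rho=\tfrac15$, at which $H$ is finite but non-analytic.

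For the count I would expand $H(z)$ in a Puiseux series about $z=\tfrac15$. Writing the singular part as $-\sqrt{(1-z)(1-5z)}/\big(2z(2-z)\big)$ and evaluating the analytic factor $-\sqrt{1-z}/\big(2z(2-z)\big)$ at $z=\tfrac15$ identifies the leading singular term $\lambda\,(1-5z)^{1/2}$ with $\lambda=-\tfrac{5\sqrt5}{9}$. Applying Proposition~\ref{propAsympt} with $\alpha=\tfrac12$, $\rho=\tfrac15$ and this $\lambda$, and using $\Gamma(-\tfrac12)=-2\sqrt\pi$, yields $|\HHH_n|\sim\tfrac{5}{18}\sqrt{\tfrac5\pi}\,5^n n^{-3/2}$, as claimed.

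For the mean and variance of the number of top points I would use the bivariate generating function $H(z,t)$ together with Proposition~\ref{propMoments}. The computation is made clean by the observation that the radicand factors as $1-(2t+4)z+t(t+4)z^2=(1-tz)\big(1-(t+4)z\big)$ (equivalently, its discriminant in $z$ is the constant $16$), so its movable branch point is simply $\rho(t)=\tfrac1{t+4}$, which equals $\tfrac15$ at $t=1$ and stays to the left of the other branch point $\tfrac1t$. Differentiating $H(z,t)$ once and twice in $t$ and setting $t=1$ produces functions whose dominant singularities at $z=\tfrac15$ are of orders $(1-5z)^{-1/2}$ and $(1-5z)^{-3/2}$; singularity analysis of these, divided by $[z^n]H(z,1)$ as in Proposition~\ref{propMoments}, gives mean $\sim\tfrac n5$, while for the variance the leading $n^2$ contributions cancel, leaving variance $\sim\tfrac{4n}{25}$, that is, standard deviation $\sim\tfrac{2\sqrt n}5$.

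Finally, for asymptotic normality I would invoke Proposition~\ref{propGaussian}. The function $H(z,t)$ is analytic at the origin with non-negative coefficients, and I would put it in the required form $A(z,t)+B(z,t)C(z,t)^{\alpha}$ by taking $C(z,t)=(1-tz)\big(1-(t+4)z\big)$, $\alpha=\tfrac12$, and $A,B$ the rational functions $N/M$ and $-1/M$ with $N=1-(t+2)z+2tz^2$ and $M=2z(t+1-tz)$. It then remains to check condition (ii) at $(\rho,1)=(\tfrac15,1)$: that $C(z,1)=0$ has $\tfrac15$ as its unique dominant, simple root, and that $B(\tfrac15,1)\neq0$, $\partial_zC(\tfrac15,1)\neq0$ and $\partial_tC(\tfrac15,1)\neq0$; each of these is a direct evaluation (for instance $\partial_tC(\tfrac15,1)=-\tfrac4{25}\neq0$). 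Since the asymptotic variance $\tfrac{4n}{25}$ is non-zero, the theorem then delivers a non-degenerate Gaussian limit. I expect the main obstacle to be exactly this verification together with the constant-tracking in the previous step: one must confirm that the movable branch point $\rho(t)=\tfrac1{t+4}$ remains the unique dominant singularity of $H(z,t)$, simple and analytically varying, throughout a neighbourhood of $t=1$ — so that neither the second branch point $\tfrac1t$ nor the pole of $M$ at $z=\tfrac{t+1}t$ interferes — and one must carry the moment expansion to high enough order to secure the cancellation of the $n^2$ terms and so pin down the exact values $\tfrac n5$ and $\tfrac{2\sqrt n}5$.
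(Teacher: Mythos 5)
Your proposal is correct and follows essentially the same route as the paper: a Puiseux expansion of $H(z)$ at $\rho=\tfrac15$ fed into Proposition~\ref{propAsympt} for the counting asymptotics, Propositions~\ref{propAsympt} and~\ref{propMoments} for the mean and variance, and Proposition~\ref{propGaussian} applied with exactly the same decomposition $A(z,t)+B(z,t)\,C(z,t)^{1/2}$ (your $C$ is the paper's $1-(2t+4)z+t(t+4)z^2$ in the factored form $(1-tz)\big(1-(t+4)z\big)$). The only differences are cosmetic: your explicit factorisation of the radicand and tracking of the movable branch point $\rho(t)=\tfrac{1}{t+4}$ make the verification of the hypotheses slightly more transparent, but the argument is the same.
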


\begin{proof}
These results follow in the standard way from 
singularity analysis and the extraction of moments from the generating function.

The asymptotics of $|\HHH_n|$ results from the application of Proposition~\ref{propAsympt} to the first nonconstant term of
the Puiseux expansion for $H(z)$ around its dominant singularity $\rho=\frac{1}{5}$:
$$
H(z) \;\sim\; \tfrac{2}{3} \:-\: \tfrac{5\sqrt{5}}{9}\sqrt{1-5\+z} \:+\: O(1-5\+z).
$$

The asymptotic mean and standard deviation for the number of top points follow by the application of Propositions~\ref{propAsympt} and~\ref{propMoments}.
It is also readily verified that $H(z,t)$ is amenable to Proposition~\ref{propGaussian}. Indeed, in the notation of that proposition, we have
\begin{align*}
  A(z,t) & \;=\; \frac{1 - (t+2)\+z + 2\+t\+z^2}{2\+z\+(t+1-t\+z) } , \\[5pt]
  B(z,t) & \;=\; \frac{-1}{2\+z\+(t+1-t\+z) } , \\[5pt]
  C(z,t) & \;=\; 1 - (2\+t+4)\+z + t\+(t+4)\+z^2 ,
\end{align*}
and the exponent $\alpha=1/2$. The relevant conditions are then satisfied for any $r\in[1/5,1)$ and $\varepsilon\in(0,1)$, with
$\rho=1/5$, as expected, being the unique root of $C(z,1)=0$ whose magnitude is at most $r$. The number of top points thus 
exhibits a Gaussian limit distribution concentrated around $n/5$.
\end{proof}

The first twelve terms of the sequence $|\HHH_n|$ are 1, 2, 6, 21, 79, 311, 1265, 5275, 22431, 96900, 424068, 1876143.
More values
can be found at
\href{http://oeis.org/A033321}{A033321} in OEIS~\cite{OEIS}.

\section{Av(4213,2413)}\label{sectD}

Let us now turn to a consideration of the structure and enumeration of class $\DDD$.
In the first two propositions, we present a characterisation of the class, which we then use to enumerate the class in our main theorem.
We conclude by determining some asymptotic properties.

\begin{figure}[t]
$$
\boxed{
\begin{array}{c}
\av(\mathbf{4213})                                                  \\[4pt]
\bigcup                                                             \\[6pt]
\GGG \;=\; \,\GenGCThreeAvRaw{21}{21}{213} \phantom{ \;=\; \GGG}    \\[12pt]
\bigcup                                                             \\[4pt]
\DDD \;=\; \classD \phantom{ \;=\; \DDD}                            \\[4pt]
\bigcup                                                             \\[4pt]
\hspace{-.35in}
\phantom{\,\GenGCTwoAvRaw{21}{213}\, \;=\; }
\HHH \;=\; \classH \;=\; \,\GenGCTwoAvRaw{21}{213}
\phantom{ \;=\; \HHH}                                               
\hspace{-.35in}
\end{array}
}
$$
\centering{\small$\av(\mathbf{4213})$ is enumerated by \href{http://oeis.org/A022558}{A022558}, $\DDD$ by \href{http://oeis.org/A165537}{A165537}, and $\HHH$ by \href{http://oeis.org/A033321}{A033321}.\\[4pt]}
\caption{The permutation class hierarchy}
\label{figClassInclusions}
\end{figure}

\begin{propO}\label{propAinG}
The 
class
$\DDD=\classD$ is a subclass of
$\GGG=\,\GenGCThreeAv{21}{21}{213}\!\!$, which is itself a subclass of $\av(\mathbf{4213})$.
\end{propO}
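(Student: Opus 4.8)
The statement combines two inclusions, and I would treat them separately. For the outer inclusion $\GGG\subseteq\av(\mathbf{4213})$ I would argue by contradiction in the style of the proof of Proposition~\ref{propH}: suppose some gridded permutation in $\GGG$ contained an occurrence of $\mathbf{4213}$, with points $a,b,c,d$ (left to right) realising the values $4,2,1,3$. The two off-diagonal cells are copies of $\av(\mathbf{21})$ and the top-left cell is empty, so the top-right cell is the whole top row and hence holds the largest values of the occurrence, while the bottom-left cell is the whole left column and hence holds the leftmost points. Thus at most one point of the occurrence can sit in the top-right cell, since two points there would be the two largest, namely $a$ and $d$, which decrease; and at most one can sit in the bottom-left cell, since two points there would be the two leftmost, $a$ and $b$, which also decrease. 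The heart of the argument is the point $a$ of value $4$: it is simultaneously the largest and the leftmost point of the occurrence, so wherever it is placed, the points $b,c,d$, lying strictly below and to the right of $a$, cannot share either $\av(\mathbf{21})$ cell with it and are forced into the bottom-right cell. But $b,c,d$ form an occurrence of $\mathbf{213}$, contradicting that this cell is a copy of $\av(\mathbf{213})$.

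For the inner inclusion $\DDD\subseteq\GGG$ I would produce an explicit gridding of an arbitrary $\sigma\in\DDD$ of length $n$, reusing the device from the proof of Proposition~\ref{propH}. Let $k$ be minimal such that the values $k,k+1,\ldots,n$ occur in increasing positional order; these become the \emph{top points} and I place them in the top-right cell, so the horizontal cut sits just below value $k$. Let $p_0$ be the position of the value $k$, the leftmost top point, and make the vertical cut immediately to its left. By the choice of $k$ every top point lies at a position $\geqslant p_0$, so the left-hand column contains only values smaller than $k$; the top-left cell is therefore empty, and the top-right cell is increasing by construction.

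It then remains to check the two remaining cells, and this is where the defining patterns enter. The bottom-right cell consists of the points at positions $>p_0$ with value $<k$; were it to contain a $\mathbf{213}$, that $\mathbf{213}$ together with the top point of value $k$ sitting to its left would constitute a $\mathbf{4213}$, which is excluded. The bottom-left cell consists of all points at positions $<p_0$, each necessarily of value $<k$, and I claim these are increasing. If instead there were a descent $\sigma_i>\sigma_j$ with $i<j<p_0$, then by the minimality of $k$ the value $k-1$ must occur to the right of $p_0$, say at position $q>p_0$ (otherwise $k-1,k,\ldots,n$ would already be increasing), and the four points at positions $i<j<p_0<q$ would read $\sigma_i,\sigma_j,k,k-1$, an occurrence of $\mathbf{2143}$, again excluded. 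Hence all four cells satisfy their constraints and $\sigma\in\GGG$.

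The routine parts are the inspection arguments for the off-diagonal $\av(\mathbf{21})$ cells; the main obstacle is arranging a single pair of cuts that works for all three nonempty cells simultaneously. In particular, tying the vertical cut to the leftmost top point $p_0$ is what makes both crucial verifications go through, and the genuinely new ingredient beyond Proposition~\ref{propH} is the $\mathbf{2143}$ argument showing that the prefix to the left of $p_0$ is increasing, so that it forms a legitimate $\av(\mathbf{21})$ cell rather than merely a $\mathbf{213}$-avoiding one.
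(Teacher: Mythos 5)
Your proof is correct, and it ends up constructing the very same gridding as the paper for the inclusion $\DDD\subseteq\GGG$: the top points are the values $k,k+1,\ldots,n$ of the maximal increasing subsequence of that form, and your left cell (all points before the position of $k$) coincides, for permutations in $\DDD$, with the paper's left cell, which is instead \emph{defined} as a maximal increasing prefix of values below $k$. What differs is how the verification is distributed among the cells. The paper gets the lower-left $\av(\mathbf{21})$ condition for free and concentrates all the work in the lower-right cell, where excluding a $\mathbf{213}$ needs both basis patterns: avoidance of $\mathbf{4213}$ forces any such $\mathbf{213}$ to begin to the left of $k$, and then a $\mathbf{2143}$ is assembled from the descent just after the increasing prefix together with $k$ and $k-1$. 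By anchoring the vertical cut at the position of $k$, you make the lower-right verification need only $\mathbf{4213}$ (since $k$ lies to the left of everything in that cell), and you spend $\mathbf{2143}$ exactly once, to show that the prefix before $k$ is increasing; each basis element polices one cell, which is a cleaner division of labour, at the price of having to prove a fact (the prefix is increasing) that the paper builds into its definitions. For the outer inclusion $\GGG\subseteq\av(\mathbf{4213})$, the paper shows that $\mathbf{4213}$ itself admits no gridding, reusing Proposition~\ref{propH} to see that the lower-left cell of such a gridding would have to be nonempty; your direct case analysis on where the point playing the $\mathbf{4}$ can sit --- concluding that wherever it is, the remaining three points are forced into the $\av(\mathbf{213})$ cell --- is self-contained, equally valid, and avoids the appeal to that proposition (it implicitly inlines the fact that grid classes are closed under pattern containment, which the paper's "no gridding of $\mathbf{4213}$" formulation relies on).
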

These class inclusions are illustrated in Figure~\ref{figClassInclusions}.

\begin{proof}
Suppose $\sigma$ is a permutation in $\DDD$ of length $n$, and
let $\tau$ be the maximal increasing subsequence of $\sigma$ of the form $(k,k+1,\ldots,n-1,n)$ for some $k\leqslant n$.
If $k>1$, then, by the maximality of $\tau$, the term $k-1$ occurs to the right of $k$ in $\sigma$.
Now let $\lambda$ be the maximal increasing prefix $(\sigma_1,\ldots,\sigma_j)$ of $(\sigma_1,\ldots,\sigma_{k-1})$.
If $\sigma_{j+1}< k$, then, by the maximality of $\lambda$, we have $\sigma_{j+1}<\sigma_j$.

We claim that $\sigma$
can be gridded in $\GGG$ by placing
the points of $\tau$ in the upper right cell,
the points of $\lambda$ in the lower left cell,
and any remaining points in the lower right cell.
Clearly, both $\tau$ and $\lambda$ avoid $\mathbf{21}$.
Suppose there was an occurrence of $\mathbf{213}$ in the lower right cell.
The point $k$ can't occur to the left of the $\mathbf{213}$, since that would form a $\mathbf{4213}$.
Thus $k-1$ occurs to the right of $k$, $\sigma_{j+1}< k$, and thus $\sigma_{j+1}<\sigma_j$.
But that is impossible, because the points $\sigma_j$, $\sigma_{j+1}$, $k$ and $k-1$ would form a $\mathbf{2143}$.

On the other hand, we claim that there is no gridding of $\pi=\mathbf{4213}$ in $\GGG$. From Proposition~\ref{propH}, we know that such a gridding would require at least one point in the lower left cell.
Moreover, only the first point of $\pi$ can occur in that cell without creating a $\mathbf{21}$ there.
But, if the lower left cell were to contain only the first point of $\pi$, then
its remaining points would form a $\mathbf{213}$ in the lower right cell.
\end{proof}

Let us now determine the structure of griddings in $\GGG$ of permutations in $\DDD$.
Given a gridding of a permutation in $\GGG$, we refer to the points in the lower left cell as the \emph{left points}.
We also make use of the concept of a top or left point \emph{splitting}
(part of) a tree in the lower right cell.
A top point splits (part of) a tree if it occurs to the right of its leftmost point and to the left of its rightmost point.
Analogously, a left point splits (part of) a tree if it occurs above its lowermost point and below its uppermost point.
Splitting is illustrated in the figures 
by pale 
stripes.

\begin{propO}\label{propAGrid}
  A gridding of a permutation in $\GGG$ 
  avoids $\mathbf{2143}$ if and only if
  \begin{bullets}
  \item[(a)] no left point splits a tree below its uppermost non-trunk vertex, and
  \item[(b)] if a left point splits a tree, then no top point splits its trunk.
  \end{bullets}
\end{propO}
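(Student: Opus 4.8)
The plan is to prove both implications: the ``if'' direction ((a) and (b) $\Rightarrow$ avoids $\mathbf{2143}$) by a short case analysis on where a hypothetical occurrence can sit, and the ``only if'' direction (avoids $\mathbf{2143}$ $\Rightarrow$ (a) and (b)) by contraposition, exhibiting an explicit $\mathbf{2143}$ whenever (a) or (b) fails. Throughout I fix an occurrence with entries $a>b$ in positions $1,2$ and $c>d$ in positions $3,4$, so that the high descent $c>d$ lies above and to the right of the low descent $a>b$.

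First I would set up a reduction that severely limits where such an occurrence can lie. Discarding the left points yields a gridding in $\HHH$, so by Proposition~\ref{propH} (together with $\HHH\subseteq\av(\mathbf{2143})$) every occurrence of $\mathbf{2143}$ must use at least one left point. Since the left points are increasing and lie to the left of every non-left point, any left point among $a,b,c,d$ forces all earlier entries to be left points as well, creating a descent among increasing points; hence the unique left point is $a$. Moreover $b<a$ and top points lie above all left points, so $b$ cannot be a top point; thus $b$ is a tree point and $a$ is a left point lying above it, in the tree $T\ni b$.

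Next I would dispose of the high descent. As $a$ is the only left point, $c,d$ are top or tree points, and $c>d$ forces either (I) both $c,d$ tree points, or (II) $c$ a top point and $d$ a tree point (the others are impossible since top points are increasing and lie above all tree points). A skew-sum/position argument confines $d$ to the very tree $T$: a later tree lies entirely below $T$ and an earlier one entirely to its left, either contradicting $d>a>b$ together with $d$ being right of $b$; the same confines $c$ in case (I). In case (I), reading $c>d$ inside $T$ forces $d$ to be a non-trunk vertex (a trunk point has nothing above and to its left within its tree), and since $d>a$ while $b<a$, the left point $a$ splits $T$ below its uppermost non-trunk vertex, contradicting (a). In case (II), if $d$ is a non-trunk vertex the same contradiction with (a) occurs; otherwise $d$ is on the trunk, hence no further right than the tip, so $c$ lies strictly between the root and the tip in position and therefore splits the trunk of $T$, while $a$ splits $T$ (as $b<a<d$ with $b,d\in T$), contradicting (b). This establishes the ``if'' direction.

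For the ``only if'' direction I would argue contrapositively and build the patterns directly. If (a) fails, a left point $p$ splits some tree $T$ below a non-trunk vertex $w$; taking $a=p$, $b$ the lowermost point (root) of $T$, $d=w$, and $c$ a trunk vertex above and to the left of $w$ (which exists because $w$ is not a left-to-right maximum of $T$) produces an occurrence of $\mathbf{2143}$. If (b) fails, a left point $p$ splits a tree $T$ whose trunk is split by a top point $t$; taking $a=p$, $b$ the root of $T$, $c=t$, and $d$ the tip of $T$ works, since $t$ lies strictly between the root and the tip in position and the tip exceeds $p$. In each case one checks that the value order is $b<a<d<c$ and the position order is $a<b<c<d$. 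The main obstacle I anticipate is the geometry of the ``if'' direction: confining the high descent to the single tree $T$ and correctly separating the trunk from non-trunk cases, so that case (II) with $d$ on the trunk genuinely forces $c$ to split the \emph{trunk} rather than merely the tree. Making the definitions of splitting, the trunk, its tip, and the uppermost non-trunk vertex interact cleanly---in particular that trunk points are weakly left of the tip while non-trunk points may lie to its right---is where the care is needed; the converse constructions are then routine.
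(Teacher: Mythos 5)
Your proof is correct and takes essentially the same approach as the paper. Your cases (I) and (II) in the forward direction are exactly the paper's two griddings $\Gamma_1$ and $\Gamma_2$ of $\mathbf{2143}$ in $\GGG$ (which you derive via Proposition~\ref{propH} rather than reading off a figure), and your contrapositive constructions exhibit the same four-point occurrences the paper uses (left point, root, trunk vertex, uppermost non-trunk vertex for (a); left point, root, top point, tip for (b)).
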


\begin{proof}
  If a left point $\ell$ splits a tree below its uppermost non-trunk vertex, then $\ell$ and the root, tip and uppermost non-trunk vertex of the tree form a $\mathbf{2143}$.
  Similarly, if a left point $\ell$ splits a tree and a top point $t$ splits its trunk, then $\ell$, the root of the tree, $t$ and the tip of the tree form a $\mathbf{2143}$.

\begin{figure}[ht]
\vspace{6pt}
$$
\begin{tikzpicture}[scale=0.25]
\path [fill=blue!15] (0.5,1.8) rectangle (5.5,2.2);
\draw[thick] (3.5,4)--(2.25,1)--(4.75,3);
\plotperm[blue!50!black]{5}{2}
\plotpt[blue!50!black]{2.25}{1}
\plotpt[blue!50!black]{3.5}{4}
\plotpt[blue!50!black]{4.75}{3}
\draw[thick] (0.5,4.5)--(5.5,4.5);
\draw[thick] (1.5,0.5)--(1.5,5.5);
\node[] at (3,-0.7) {$\Gamma_1$};
\end{tikzpicture}
\qquad\quad\qquad
\begin{tikzpicture}[scale=0.25]
\path [fill=blue!15] (0.5,2.3) rectangle (5.5,2.7);
\path [fill=blue!15] (3.3,0.5) rectangle (3.7,5.5);
\draw[thick] (2.25,1.25)--(4.75,3.75);
\plotperm[blue!50!black]{5}{2.5}
\plotpt[blue!50!black]{2.25}{1.25}
\plotpt[blue!50!black]{3.5}{5}
\plotpt[blue!50!black]{4.75}{3.75}
\draw[thick] (0.5,4.5)--(5.5,4.5);
\draw[thick] (1.5,0.5)--(1.5,5.5);
\node[] at (3,-0.7) {$\Gamma_2$};
\end{tikzpicture}
\vspace{-9pt}
$$
\caption{The two griddings of $\mathbf{2143}$ in $\GGG$}
\label{fig2143Gridding}
\end{figure}

  On the other hand, 
  there are only two griddings of $\mathbf{2143}$ in $\GGG$, which we denote $\Gamma_1$ and $\Gamma_2$ as shown in Figure~\ref{fig2143Gridding}.
  A gridding of a permutation in $\GGG$ can only contain an occurrence of $\Gamma_1$ if the left point of $\Gamma_1$ splits a tree below its uppermost non-trunk vertex.
  If a gridding of a permutation in $\GGG$ contains an occurrence of $\Gamma_2$ but no occurrence of $\Gamma_1$, then the left point of $\Gamma_2$ splits a single edge in the trunk of a tree and the top point of $\Gamma_2$ splits the same edge.
\end{proof}

We are now in a position to enumerate $\DDD$.

\begin{thmO}\label{thmAgf}
The generating function for $\DDD=\classD$ is
$$
D(z) \;=\;
\frac{(1-2\+z) \Big({-}1+5\+z-7\+z^2+2\+z^3 +(1-z) \sqrt{1-6\+z+5\+z^2}\Big)}{1-10\+z+24\+z^2-20\+z^3+4\+z^4} .
$$
\end{thmO}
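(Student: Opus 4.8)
The plan is to extend the symbolic-method computation used for $\HHH$ (Proposition~\ref{propHgf}) by accounting for the left points and the extra structure they impose, as characterised in Proposition~\ref{propAGrid}. Just as we did for $\HHH$, I would work with canonical griddings: for each $\sigma\in\DDD$ fix the gridding in $\GGG$ in which the number of top points is minimal and, among those, the number of left points is minimal. This minimality should make the correspondence between permutations and griddings bijective, so that enumerating canonical griddings enumerates $\DDD$. The key combinatorial input is Proposition~\ref{propAGrid}: a gridding is valid (avoids $\mathbf{2143}$) exactly when (a) no left point splits a tree below its uppermost non-trunk vertex, and (b) no tree is split by both a left point and a top point on its trunk. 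These two conditions tell me precisely where left points are allowed to sit relative to each tree in the lower-right cell.

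First I would set up the structural decomposition of a gridded permutation in $\DDD$ as the skew sum of a sequence of plane trees (the lower-right cell, each tree decorated by top points to the right of some vertices, exactly as in the class $\TsupT$ from the $\HHH$ analysis), together with the increasing sequence of top points and the increasing sequence of left points. The left points interact with the trees through splitting, so the heart of the work is to describe, for a single tree, the admissible ways a left point may split it. Condition~(a) restricts a splitting left point to lie at or above the uppermost non-trunk vertex, i.e.\ within the trunk region above all branching; condition~(b) forbids such a tree from simultaneously carrying a top point that splits its trunk. I would therefore refine the tree class $\TsupT$ into cases according to whether a tree is split by a left point, and whether its trunk carries a splitting top point, introducing the subclasses needed to respect the interaction. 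This is where I expect the bookkeeping to be delicate, and it is the main obstacle: correctly encoding the ``exactly one of left-splits / top-splits-trunk'' exclusion while still allowing unrestricted top points elsewhere on the tree, and doing so in a way that meshes with the canonical (minimality) conventions so that each permutation is counted once.

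Having built the refined structural equations, I would translate them into functional equations in $z$ (using a catalytic variable for the tree generating function $\tsupT(z,t)$ from Proposition~\ref{propHgf} as before, specialising marks at the end), assemble the full generating function for canonical griddings as a sequence of decorated trees interleaved with top and left points, and then solve. The algebra will reduce, as in the $\HHH$ case, to a quadratic whose discriminant is $1-6z+5z^2=(1-z)(1-5z)$, which already appears in $H(z)$; this is a good consistency check, since $\HHH\subseteq\DDD$ and setting the left-point contribution to zero should recover $H(z)$. Solving the quadratic, choosing the branch analytic at the origin, and simplifying should yield the stated closed form
$$
D(z) \;=\;
\frac{(1-2\+z)\Big({-}1+5\+z-7\+z^2+2\+z^3+(1-z)\sqrt{1-6\+z+5\+z^2}\Big)}{1-10\+z+24\+z^2-20\+z^3+4\+z^4}.
$$
Finally I would verify the result by expanding $D(z)$ as a power series and checking the initial coefficients against the counts of $\DDD_n$ (sequence \href{http://oeis.org/A165537}{A165537}).
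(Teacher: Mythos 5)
Your outline does follow the same broad strategy as the paper -- canonical griddings in $\GGG$, Proposition~\ref{propAGrid} to control splitting, refined tree classes, symbolic method -- but there is a genuine gap, and it sits exactly at the step you yourself flag as ``the main obstacle'' and then leave unresolved. The first problem is your choice of canonical gridding: you minimise top points first and left points second, whereas the paper minimises \emph{left} points first and then grids the remaining points canonically as in $\HHH$. This is not cosmetic; the two rules select different griddings. Take $\sigma=\mathbf{213}$. The paper's rule gives no left points, one top point (the $3$) and lower-right cell $\mathbf{21}$; your rule prefers zero top points, which is achievable by promoting the $2$ to a left point, so your canonical gridding of $\mathbf{213}$ has a left point splitting the \emph{upper} tree. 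Consequently the local characterisation that drives the paper's symbolic computation -- namely that minimality forces (c) the uppermost left point to split a tree \emph{other than} the upper tree, and (d) the leftmost top point to lie to the right of, and not adjacent to, the tip of the upper tree's trunk -- is false for your canonical griddings. Under your ordering, global top-point minimality is a non-local condition: whether a candidate top point can be dropped into the lower-right cell depends on whether an increasing prefix of $\sigma$ can be absorbed into the left cell to destroy the blocking $\mathbf{21}$, which couples the left cell to the shape of the upper tree in a way your sketch never analyses. It also destroys the clean dichotomy the paper exploits, that a canonical gridding has a left point if and only if the permutation contains $\mathbf{2413}$, which is what lets the paper enumerate $\DDD\setminus\HHH$ and simply add $H(z)$.

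Beyond the ordering issue, the proposal stops where the real work begins. The mathematical content of the paper's proof is: the equivalence between minimality and the structural constraints (the uppermost left point is the first point of an occurrence of $\mathbf{2413}$, hence splits a non-upper tree); the explicit refined classes (a path tree decorated by the left points splitting it, a tree split by at least one left point whose top points are then confined to the right of its tip by constraint (b) of Proposition~\ref{propAGrid}, and the skew-sum assembly of $\DDD\setminus\HHH$ from an unsplit upper tree, unsplit trees, a first split tree, and further trees with left points interleaved); and the translation of these into functional equations that are then solved. You name none of these classes and write no equations, so there is nothing from which your claimed quadratic could actually be derived -- the observation that the discriminant should again be $1-6\+z+5\+z^2$ is a plausible consistency check, not a derivation, and the intermediate trivariate expression is in fact far more complicated before specialising the marks. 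The repair is straightforward: adopt the paper's order of minimisation, prove the characterisation (c)--(d) above, and then the decomposition you gesture at does go through essentially as in Proposition~\ref{propHgf}, augmented by the split-tree classes.
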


\begin{figure}[ht]
$$
\begin{tikzpicture}[scale=0.25]
\path [fill=blue!15] (4.8,0.5) rectangle (5.2,22.5);
\path [fill=blue!15] (7.8,0.5) rectangle (8.2,22.5);
\path [fill=blue!15] (11.8,0.5) rectangle (12.2,22.5);
\path [fill=blue!15] (12.8,0.5) rectangle (13.2,22.5);
\path [fill=blue!15] (20.8,0.5) rectangle (21.2,22.5);
\path [fill=blue!15] (0.5,3.8) rectangle (22.5,4.2);
\path [fill=blue!15] (0.5,13.8) rectangle (22.5,14.2);
\draw[thick] (20,5)--(18,1)--(22,2);
\draw[thick] (10,6)--(11,7)--(14,11);
\draw[thick] (11,7)--(15,8)--(16,10);
\draw[thick] (15,8)--(17,9);
\draw[thick] (7,15)--(6,12)--(9,13);
\plotperm[blue!50!black]{22}{4,14,17,16,18,12,15,19,13,6,7,20,21,11,8,10,9,1,3,5,22,2}
\draw[thick] (0.5,17.5)--(22.5,17.5);
\draw[thick] (2.5,0.5)--(2.5,22.5);
\draw[thick,dashed] (0.5,16.5)--( 3.5,16.5)--( 3.5,22.5);
\draw[thick,dashed] (0.5,15.5)--( 5.5,15.5)--( 5.5,22.5);
\draw[thick,dashed] (0.5,11.5)--( 9.5,11.5)--( 9.5,22.5);
\draw[thick,dashed] (0.5, 5.5)--(17.5, 5.5)--(17.5,22.5);
\end{tikzpicture}
\vspace{-6pt}
$$
\caption{The canonical gridding in $\GGG$ of a permutation in $\DDD$}
\label{figDExample}
\end{figure}

\begin{proof}
For each permutation $\sigma$ in $\DDD$, we
consider the gridding of $\sigma$ in $\GGG$
in which the number of
left points
is minimal, with the remaining points
canonically gridded (as a gridding in $\HHH$) in the two cells at the right.
See Figure~\ref{figDExample} for an illustration.
We call this the \emph{canonical} gridding of $\sigma$ in $\GGG$.
Clearly, there is a bijection between permutations in $\DDD$ and their canonical griddings in $\GGG$.

\begin{figure}[ht]
\vspace{6pt}
$$
\begin{tikzpicture}[scale=0.25]
\path [fill=blue!15] (0.5,1.8) rectangle (5.5,2.2);
\draw[thick] (3.5,1)--(4.75,3);
\plotperm[blue!50!black]{5}{2}
\plotpt[blue!50!black]{2.25}{4}
\plotpt[blue!50!black]{3.5}{1}
\plotpt[blue!50!black]{4.75}{3}
\draw[thick] (0.5,4.5)--(5.5,4.5);
\draw[thick] (1.5,0.5)--(1.5,5.5);
\end{tikzpicture}
\vspace{-9pt}
$$
\caption{The canonical gridding of $\mathbf{2413}$ in $\GGG$}
\label{fig2413Gridding}
\end{figure}

The minimality of the set of left points implies that the uppermost left point is the first point in an occurrence of $\mathbf{2413}$.
Since the leftmost top point occurs to the right of a $\mathbf{21}$ in the lower left cell, this $\mathbf{2413}$ must be gridded as in Figure~\ref{fig2413Gridding}. This is the case if and only if the uppermost left point splits a tree and this tree is not the upper tree.

To enumerate $\DDD$, it thus suffices to enumerate griddings of permutations in $\GGG$ satisfying the following four constraints, the first two of which characterise the avoidance of $\mathbf{2143}$ (Proposition~\ref{propAGrid}), and the last two of which characterise canonical griddings:
  \begin{bullets}
  \item[(a)] No left point splits a tree below its uppermost non-trunk vertex.
  \item[(b)] If a left point splits a tree, then no top point splits its trunk.
  \item[(c)] The uppermost left point splits a tree and this tree is not the upper tree.
  \item[(d)] The leftmost top point occurs to the right of the (tip of the) trunk of the upper tree and not adjacent to it.
  \end{bullets}

Let us consider the set of permutations in $\DDD$ containing $\mathbf{2413}$ (i.e.~those whose griddings have at least one left point).

As in the proof of Proposition~\ref{propHgf}, we use the symbolic method to construct this set, reusing the definitions of $\PPP$, $\TsupT$ and $\Ttop$ in~\eqref{eqTpath}--\eqref{eqTtop} in the proof of Proposition~\ref{propHgf}.
We mark left points with~$\ell$ and top points with $t$ for clarity (although, again, we don't actually need any extra catalytic variables for the enumeration).
In Figure~\ref{figDExample}, the dashed lines illustrate how top and left points are associated with the trees.

The structural equations are as follows:
\begin{align}
  \Pleft     & \;=\;  \ZZZ \times \seqbig{\ZZZ \times \seq{\ell\+\ZZZ}}  \label{eqPleft}  \\[5pt]
  \Psplit & \;=\;  (\Pleft-\PPP)\times \seq{t\+\ZZZ}            \label{eqPleftr} \\[5pt]
  \Tsplit & \;=\;  \Psplit \:\;+\;\: \Psplit \times \seqplus{\TsupT} \times \seqbig{\ZZZ \times \seq{\TsupT}}  \label{eqTleftr} \\[5pt]
  \DDD \setminus \HHH & \;=\;  \Ttop \:\times\: \seq{\TsupT} \:\times\: \Tsplit \:\times\: \seqbig{\seq{\ell\+\ZZZ} \times (\TsupT + \Tsplit)}  \:\times\: \seq{\ell\+\ZZZ}  \label{eqD-H}
\end{align}
In~\eqref{eqPleft},
we use $\Pleft$ to denote a path tree 
together with any left points that split it.

In~\eqref{eqPleftr},
we use $\Psplit$ to denote a path tree split by at least one left point, together with any top points that occur to its right. By constraint~(b), a path tree may not be split by both left and top points.

In~\eqref{eqTleftr},
we use $\Tsplit$ to denote a tree split by at least one left point, together with any top points that occur to the right of some of its vertices. By constraint~(b), these top points must occur to the right of its tip.

An element of $\Tsplit$ is either a path tree split by at least one left point, or else is constructed (tip downwards) from 
      \begin{bullets}
        \item[(i)]   the trunk above the uppermost branching point, split by at least one left point, which may have top points to the right of the tip,
        \item[(ii)]  at least one subtree rooted at the the uppermost branching point on the trunk, which may have top points to the right of some of its vertices, and
        \item[(iii)] any points on the trunk below the uppermost branching point, each possibly rooting a subtree that may have top points to the right of some of its vertices.
      \end{bullets}

Finally, in~\eqref{eqD-H}, the set
of canonical griddings of permutations in $\DDD$ whose griddings have at least one left point
is constructed, starting at the upper left, from
      \begin{bullets}
        \item[(i)]   the upper tree, which, by constraint~(c), is never split by a left point,
        \item[(ii)]  zero or more $\TsupT$ trees, which are not split by a left point,
        \item[(iii)] a $\Tsplit$ tree, split by the uppermost left point,
        \item[(iv)]  zero or more $\TsupT$ or $\Tsplit$ trees, each possibly with left points above it, and
        \item[(v)]   zero or more left points below the lowermost tree.
      \end{bullets}
Translation into functional equations yields:
\begin{align*}
  \pleft(z,t,\ell)   & \;=\;  \seqfrac[z]{\seqfrac[z]{\ell\+z}} ,
  \\[6pt]
  \psplit(z,t,\ell) & \;=\;  \Big(\!\pleft(z,t,\ell) - \seqplusfrac{z}\Big) \seqfrac{t\+z} ,
  \\[6pt]
  \tsplit(z,t,\ell) & \;=\;  \psplit(z,t,\ell) \:\;+\;\: \seqplusfrac[\psplit(z,t,\ell)\+]{\tsupT(z,t)} \times \seqfrac{\seqfrac[z]{\tsupT(z,t)}} ,
  \\[6pt]
  D(z,t,\ell) - H(z,t)     & \;=\;  \seqfrac[\ttop(z,t)]{\tsupT(z,t)} \times
    \seqfrac[\tsplit(z,t,\ell)]{\seqfrac{\ell\+z}\big(\tsupT(z,t)+\tsplit(z,t,\ell)\big)}
    \times \seqfrac{\ell\+z} .
\end{align*}
Solving, expanding, simplifying and rearranging yields a (very complicated) algebraic expression for $D(z,t,\ell)$.
The univariate generating function then results from setting $t=1$ and $\ell=1$ and simplifying.
\end{proof}

From the generating function, we can determine the asymptotic behaviour.
\begin{corO}\label{corD}
The following properties hold in the limit:
\begin{bullets}
\item[(i)] Asymptotically, $|\DDD_n| \sim \frac{60}{121}\sqrt{\frac{5}{\pi}}\+5^n\+n^{-3/2}$, so
  the probability that a random large permutation avoiding $\mathbf{4213}$ and $\mathbf{2143}$
  also avoids $\mathbf{2413}$ tends to $\frac{121}{216} > \half$.
\item[(ii)] The number of left points in the canonical gridding of a permutation in $\DDD$ has asymptotic mean $\frac{175}{132}\approx 1.326$
  and standard deviation $\frac{\sqrt{74795}}{132}\approx2.072$.
\item[(iii)] The number of top points in the canonical gridding of a permutation in $\DDD_n$
is normally distributed with mean $\frac{n}{5}$ and standard deviation $\frac{2\sqrt{n}}{5}$.
\end{bullets}
\end{corO}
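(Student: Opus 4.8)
The plan is to derive all three statements from the generating functions established in Theorem~\ref{thmAgf} using exactly the toolkit already exercised for Corollary~\ref{corH}: singularity analysis (Proposition~\ref{propAsympt}) for the counting asymptotics, extraction of moments (Proposition~\ref{propMoments}) for means and variances, and the quasi-powers framework (Proposition~\ref{propGaussian}) for the limiting distribution of top points. The conceptual content is light; the work is in the bookkeeping.

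For (i), I would first locate the dominant singularity of $D(z)$. Since $\sqrt{1-6\+z+5\+z^2}=\sqrt{(1-z)(1-5\+z)}$, the function has branch points at $z=\tfrac15$ and $z=1$. A genuine subtlety is that the denominator $1-10\+z+24\+z^2-20\+z^3+4\+z^4$ has a real root near $z\approx0.144$, which lies inside the disc $|z|<\tfrac15$; by Pringsheim's theorem this would dominate $\tfrac15$ unless it is removable, so I would verify algebraically that the numerator vanishes there (a direct check confirms the bracketed factor $-1+5\+z-7\+z^2+2\+z^3+(1-z)\sqrt{1-6\+z+5\+z^2}$ does vanish at this root). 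With $\tfrac15$ thus established as the unique dominant singularity, I would compute the Puiseux expansion $D(z)=c_0+c_1\sqrt{1-5\+z}+O(1-5\+z)$ about $z=\tfrac15$ and apply Proposition~\ref{propAsympt} with $\alpha=\tfrac12$ and $\rho=\tfrac15$ to the term $c_1\sqrt{1-5\+z}$; matching $c_1/\Gamma(-\tfrac12)$ against the stated constant fixes $c_1=-\tfrac{120\sqrt5}{121}$. The probability statement is then immediate, since a permutation in $\DDD_n$ avoids $\mathbf{2413}$ precisely when it lies in $\HHH_n$: the desired probability is $|\HHH_n|/|\DDD_n|$, which by Corollary~\ref{corH} and part~(i) tends to $\frac{5/18}{60/121}=\frac{121}{216}$.

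For (ii), I would set $t=1$ in $D(z,t,\ell)$ and apply Proposition~\ref{propMoments} with $\ell$ marking left points. Because the stated mean $\tfrac{175}{132}$ and variance are bounded constants, the number of left points does not scale with $n$; the mean and variance are the limiting ratios of $\big[z^n\big]\partial_\ell D(z,1,\ell)\big|_{\ell=1}$ and $\big[z^n\big]\partial^2_\ell D(z,1,\ell)\big|_{\ell=1}$ to $\big[z^n\big]D(z,1,1)$. Each of these has a square-root singularity at $\tfrac15$, so every ratio converges and reduces to a ratio of the corresponding coefficients of $\sqrt{1-5\+z}$. For (iii), I would instead set $\ell=1$ and mark top points with $t$. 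At $t=1$ the expression under the square root is again $1-6\+z+5\+z^2$, so the governing factor $C(z,t)$ arising for $D(z,t,1)$ coincides at $t=1$ with the factor appearing for $\HHH$ in Corollary~\ref{corH}; consequently the hypotheses of Proposition~\ref{propGaussian} are verified in exactly the same fashion (with $\alpha=\tfrac12$ and $\rho=\tfrac15$), and Propositions~\ref{propAsympt} and~\ref{propMoments} then yield the same Gaussian law, with mean $\tfrac n5$ and standard deviation $\tfrac{2\sqrt n}{5}$, as for the subclass $\HHH$.

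The main obstacle is purely computational: the closed form for $D(z,t,\ell)$ obtained by solving the system~\eqref{eqPleft}--\eqref{eqD-H} is, as remarked in the proof of Theorem~\ref{thmAgf}, extremely unwieldy, so the Puiseux expansions, the moment extractions, and the verification of the analyticity hypotheses of Proposition~\ref{propGaussian} are best carried out with a computer algebra system rather than by hand. The only step that is not mechanical is confirming that the sub-$\tfrac15$ denominator root of $D(z)$ is removable, which guarantees that $\tfrac15$ really is the dominant singularity; once that is settled, every remaining step is a routine instance of the three analytic propositions quoted in the introduction.
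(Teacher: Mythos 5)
Your handling of parts (i) and (ii) coincides with the paper's: locate the dominant singularity at $\rho=\tfrac15$, check that the denominator root $\tfrac14\big(5+\sqrt{5}-\sqrt{22+10\sqrt{5}}\,\big)\approx 0.143922$ is cancelled by the numerator, apply Proposition~\ref{propAsympt} to the $\sqrt{1-5\+z}$ term, identify the limiting probability as $\lim |\HHH_n|/|\DDD_n|=\tfrac{121}{216}$, and extract the left-point moments from $D(z,1,\ell)$ via Propositions~\ref{propAsympt} and~\ref{propMoments}. The gap is in part (iii). You assert that, because $C(z,1)=1-6\+z+5\+z^2$ is the same function that appears for $\HHH$, the hypotheses of Proposition~\ref{propGaussian} ``are verified in exactly the same fashion.'' This is exactly the step that fails, and the paper says so explicitly: when $D(z,t,1)$ is written as $A(z,t)+B(z,t)\+C(z,t)^{1/2}$, the functions $A(z,1)$ and $B(z,1)$ individually have poles at $z\approx 0.143922<\tfrac15$. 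The pole cancels in the combination $A+B\sqrt{C}$ --- that is precisely your part-(i) observation --- but condition (i) of Proposition~\ref{propGaussian} requires $A$, $B$ and $C$ to be \emph{separately} analytic in a polydisc $\{|z|\leqslant r,\,|t-1|\leqslant\varepsilon\}$ with $r\geqslant\rho=\tfrac15$, and no such $r$ exists. So the quasi-powers argument that worked for $\HHH$ does not transfer, and the Gaussian law for top points in $\DDD$ is left unproved by your argument (your computation of the mean $\tfrac n5$ and standard deviation $\tfrac{2\sqrt n}{5}$ via moment extraction is still fine; it is only the limit \emph{distribution} that is at stake).

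The paper closes this gap with a genuinely different tool: it rearranges the structural equations into a system of polynomial equations with positive coefficients satisfied by $D(z,t,1)$ together with auxiliary series $R_1,\ldots,R_5$, $T$, $S$, and then invokes the limiting-distribution version of the Drmota--Lalley--Woods theorem. Moreover, since the dependency graph of that system is not strongly connected, the classical form of that theorem does not apply either; the paper appeals to the generalisation of Banderier \& Drmota for non-strongly-connected systems. To repair your proof you would need either to carry out this positive-system argument, or to find some alternative decomposition $A+B\sqrt{C}$ of $D(z,t,1)$ whose parts are all analytic on a disc of radius at least $\tfrac15$ --- and the paper's remark indicates that the natural algebraic simplification does not produce one.
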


\begin{proof}
For part~(i), the asymptotics of $|\DDD_n|$ results from the application of Proposition~\ref{propAsympt} to the first nonconstant term of
the Puiseux expansion for $D(z)$ around its dominant singularity $\rho=\frac{1}{5}$:
$$
D(z) \;\sim\; \tfrac{9}{11} \:-\: \tfrac{129\sqrt{5}}{121}\sqrt{1-5\+z} \:+\: O(1-5\+z).
$$
Note that, although the denominator of the expression for $D(z)$ has a real root 
whose value is 
$\tfrac{1}{4} \big(5+\sqrt{5}-\sqrt{22+10 \sqrt{5}}\big) \approx
0.143922<1/5$, this value is also a root of the numerator.

The ratio $|\HHH_n|/|\DDD_n|$ gives
the proportion of permutations in $\DDD$ avoiding $\mathbf{2413}$. The asymptotic limit of this ratio is $\frac{121}{216}$.

For part~(ii), substituting $t=1$ and simplifying yields
$$
D(z,1,\ell) \;=\;
\frac{(1-z-z\+\ell)\+\Big(P_1(z,\ell) \:-\: (1 - z\+\ell)\+(1 - \ell + z\+\ell + z\+\ell^2) \sqrt{1-6\+z+5\+z^2} \Big)}{2\+z\+P_2(z,\ell)},
$$
where
\begin{align*}
P_1(z,\ell) & \;=\;
            (1 -    \ell)
\:-\: z  \+ (3 - 3\+\ell - 2\+\ell^2)
\:+\: z^2\+ (2 - 4\+\ell - 7\+\ell^2 -    \ell^3) \\ & \qquad\qquad\qquad\qquad\qquad\qquad\qquad\qquad
\:+\: z^3\+ (    2\+\ell + 9\+\ell^2 + 3\+\ell^3)
\:-\: z^4\+ (              2\+\ell^2 + 2\+\ell^3) , \\[5pt]
P_2(z,\ell) & \;=\;
            (2 - 3\+\ell)
\:-\: z  \+ (3 - 5\+\ell -  8\+\ell^2)
\:+\: z^2\+ (1 - 3\+\ell - 15\+\ell^2 - 7\+\ell^3) \\ & \qquad\qquad\qquad\qquad\qquad\qquad\qquad\qquad
\:+\: z^3\+ (       \ell +  8\+\ell^2 + 9\+\ell^3 + 2\+\ell^4)
\:-\: z^4\+ (                  \ell^2 + 2\+\ell^3 +    \ell^4) .
\end{align*}
The asymptotic mean and standard deviation for the number of left points follow by the application of Propositions~\ref{propAsympt} and~\ref{propMoments}. The limiting distribution is illustrated in Figure~\ref{figLeftPointDistrib}.
\begin{figure}[t]
  \begin{center}
    \includegraphics[scale=0.6]{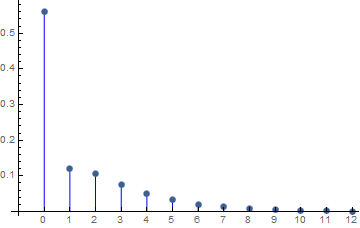}
    \vspace{-12pt}
  \end{center}
  \caption{The asymptotic distribution of the number of left points in the canonical gridding of a permutation in class $\DDD$}\label{figLeftPointDistrib}
\end{figure}

For part~(iii),
, substituting $\ell=1$ and simplifying yields
$$
D(z,t,1) \;=\;
\frac{(1 - 2\+z)\+\Big(P_3(z,t) \:-\: (1 - z)\+(1 + t - 2\+t\+z) \sqrt{1 - (2\+t+4)\+z + t\+(t+4)\+z^2}\Big)}{2\+P_4(z,t)},
$$
where
\begin{align*}
P_3(z,t) & \;=\; (1 - 2\+z)\+(1 - t\+z)\+\big((1 + t)\+(1 - 3\+z) + 2\+t\+z^2\big) , \\[5pt]
P_4(z,t) & \;=\;
      - 1
\:+\: ( 7 +  3\+t +     t^2)\+z
\:-\: (14 + 14\+t +  6\+t^2)\+z^2
\:+\: ( 9 + 22\+t + 13\+t^2)\+z^3 \\ & \qquad\qquad\qquad\qquad\qquad\qquad\qquad\qquad\qquad\qquad\qquad
\:-\: (     12\+t + 12\+t^2)\+z^4
\:+\:                4\+t^2 \+z^5 .
\end{align*}
The asymptotic mean and standard deviation for the number of left points follow by the application of Propositions~\ref{propAsympt} and~\ref{propMoments}.
 
Although $D(z,t,1)$ can easily be expressed in the form $A(z,t)+B(z,t)\sqrt{C(z,t)}$, when this is done, both
$A(z,1)$ and $B(z,1)$ have singularities at
$z 
\approx 0.143922 < \tfrac{1}{5}$.
Thus, Proposition~\ref{propGaussian} is not applicable and can't be used to deduce
an asymptotic Gaussian distribution.

However, it is not difficult to rearrange the functional equations to show that
$D=D(z,t,1)$
is a solution of the following system of polynomial equations,
\begin{align*}
R_1 &\;=\; 1 \:+\: z\+R_1                                           \\
R_2 &\;=\; 1 \:+\: z\+t\+R_2                                        \\
T   &\;=\; z\+R_2 \:+\: T^2                                         \\
R_3 &\;=\; 1 \:+\: T\+R_3                                           \\
R_4 &\;=\; 1 \:+\: z\+R_3\+R_4                                      \\
S   &\;=\; z^3\+R_1^2\+R_2\+(1 + T\+R_3\+R_4) \:+\: z\+R_1\+S       \\
R_5 &\;=\; 1 \:+\: R_1\+(T + S)\+R_5                                \\
D   &\;=\; z\+R_1\+R_3\+(1 + z\+T\+R_3\+R_4)\+(1 + R_1\+S\+R_5)     ,
\end{align*}
in which $T=T(z,t)$ and $S=S(z,t,1)$.
The significance
of this particular reformulation is that each polynomial has positive coefficients.
This allows us to apply a variant of the limiting distribution version of the 
Drmota--Lalley--Woods Theorem~\cite{Drmota1997}, which states that, under very general conditions,
a parameter of a combinatorial class satisfying such a system of equations exhibits a Gaussian limit law 
(see also~\cite[Proposition~IX.17 and subsequent discussion]{FS2009}).

The original version of this theorem requires the dependency graph of the system of equations to be strongly connected, 
something that is evidently not the case here.
However, a recent generalisation by 
Banderier \& Drmota~\cite[Theorem~8.2]{BD2015}
addresses the non-strongly connected scenario, and, with care, can be applied in this case 
to prove
that the number of top points does indeed 
exhibit a Gaussian limit distribution concentrated around $n/5$.
We leave the details to be filled in by the assiduous reader.
\end{proof}

The first twelve terms of the sequence $|\DDD_n|$ are 1, 2, 6, 22, 88, 366, 1556, 6720, 29396, 129996, 580276, 2611290.
More values
can be found at
\href{http://oeis.org/A165537}{A165537} in OEIS~\cite{OEIS}.

\section{Questions}\label{sectQ}

The somewhat unexpected result that more than half of the permutations in $\classD$ are, in fact, members of $\classH$,
is a consequence of the fact that
$|\HHH_n|/|\DDD_n|$ has a positive limit as $n$ tends to infinity.
This phenomenon
deserves further investigation, as does the weaker property that the growth rates of $\HHH$ and $\DDD$ are equal: $\gr(\HHH)=\gr(\DDD)$.
\begin{questO}
When is $\liminfty \dfrac{\big|\av_n(B\cup\{\beta\})\big|}{\big|\av_n(B)\big|}$ positive?
\end{questO}
\begin{questO}
When do we have $\gr\big(\av(B\cup\{\beta\})\big) = \gr\big(\av_n(B)\big)$?
\end{questO}

We conclude with the observation which initiated this investigation.
Suppose $\KKK=\{\SSS_1,\ldots,\SSS_k\}$ is a finite collection of permutation classes.
We say that another permutation class $\CCC$ is \emph{$\KKK$-griddable} if there is a finite matrix $M$
each entry of which is one of the $\SSS_i$,
such that $\CCC$ is a subset of the generalised grid class $\Grid(M)$. 
Vatter gave the following characterisation of the property of being \mbox{$\KKK$-griddable}.
\begin{propO}[{Vatter~\cite[Theorem~3.1]{Vatter2011}}]\label{propKGriddable}
The permutation class $\CCC$ is $\KKK$-griddable if and only if it
does not contain arbitrarily long sums or skew sums of basis elements
of members of $\KKK$, that is, if there exists a constant $m$ so that $\CCC$ does not contain
$\beta_1\oplus\ldots\oplus\beta_m$ or $\beta_1\ominus\ldots\ominus\beta_m$ for any sequence $\beta_1,\ldots,\beta_m$ of basis
elements of members of $\KKK$.
\end{propO}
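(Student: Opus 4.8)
The statement is a biconditional, so I would prove the two implications separately, and I would first record the symmetry that lets me treat only one of the two obstructions. Reversal $\pi\mapsto\widetilde{\pi}$ interchanges $\oplus$ with $\ominus$ and replaces each $\SSS_i$ by its reverse, and $\CCC$ is $\KKK$-griddable exactly when $\widetilde{\CCC}$ is griddable by the reversed family; hence it suffices to argue about direct sums throughout and transfer the skew-sum statements by applying the same argument to $\widetilde{\CCC}$.

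For necessity ($\KKK$-griddable implies bounded sums), suppose $\CCC\subseteq\Grid(M)$ with $M$ a $t\times u$ matrix over $\KKK$, and consider any $\beta_1\oplus\cdots\oplus\beta_m\in\Grid(M)$ in which each $\beta_i$ is a basis element of some member of $\KKK$; fix a gridding. The components of a direct sum occupy pairwise disjoint, increasing intervals of positions and of values, so each of the $u-1$ internal vertical grid lines lies in the position-interval of at most one component and each of the $t-1$ internal horizontal grid lines lies in the value-interval of at most one component. Thus all but at most $(t-1)+(u-1)$ of the components lie entirely inside a single cell. A pigeonhole argument over the $tu$ cells then shows that, once $m$ passes a threshold depending only on $t$, $u$ and $\KKK$, some cell of class $\SSS=M_{a,b}$ wholly contains a long sub-sum $\beta_{i_1}\oplus\cdots\oplus\beta_{i_N}$; since the points in a cell form a pattern in its class and classes are closed downward, $\SSS\in\KKK$ would then contain arbitrarily long direct sums of basis elements. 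Bookkeeping which basis elements each fixed cell class in $\KKK$ can absorb yields the required uniform bound $m$, and the skew-sum case follows by reversal.

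Sufficiency (bounded sums imply $\KKK$-griddable) is the substantial direction, and the plan is a Ramsey-type partitioning argument. The hypothesis already bounds, uniformly over $\CCC$, the length of any sum or skew sum of basis elements appearing as a pattern of a member of $\CCC$. The goal is to produce a single finite matrix $M$ over $\KKK$ with $\CCC\subseteq\Grid(M)$. I would show that every $\pi\in\CCC$ can be carved by horizontal and vertical cuts into a number of cells bounded independently of $\pi$, with each cell lying in some $\SSS_i$; the no-long-sum and no-long-skew-sum hypotheses are exactly what bound, through an Erd\H{o}s--Szekeres/Ramsey extraction, the number of cuts needed before every resulting block is trapped inside one of the $\SSS_i$. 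Finally, because $\KKK$ is a fixed finite family and only boundedly many cell positions arise, one assembles the many per-permutation griddings into a single matrix by assigning to each position a member of $\KKK$ that contains every block ever seen there.

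The main obstacle is this sufficiency direction, and within it two points are genuinely delicate: (i) obtaining a cell bound that is \emph{uniform} over the whole class rather than merely finite for each individual permutation, which is where the Ramsey argument does the real work; and (ii) passing from the per-permutation griddings to one finite defining matrix, which requires that, for each grid position, the finitely many families in $\KKK$ are rich enough that a single $\SSS_i$ absorbs all blocks occurring at that position across all of $\CCC$. Both are handled by combining the explicit Ramsey bound with the finiteness of $\KKK$. The necessity direction, by contrast, is essentially the elementary splitting pigeonhole sketched above, with the skew-sum analogues supplied by the reversal symmetry.
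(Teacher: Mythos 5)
The paper offers no proof of this proposition at all: it is quoted, with attribution, from Vatter's paper, so there is nothing internal to compare your argument against; what follows assesses your attempt on its own merits. Your necessity argument contains a gap that cannot be repaired, because the ``only if'' direction of the statement as literally worded here is false. Your counting is fine up to the last step: at most $(t-1)+(u-1)$ components are cut, and pigeonholing does place a long sub-sum $\beta_{i_1}\oplus\cdots\oplus\beta_{i_N}$ inside a single cell of class $\SSS=M_{a,b}$. But the conclusion that this is absurd does not follow: a basis element of one member of $\KKK$ may perfectly well belong to a \emph{different} member, and so may arbitrarily long sums of such elements, so no ``bookkeeping'' closes the argument. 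Concretely, take $\KKK=\{\av(\mathbf{12}),\av(\mathbf{21})\}$ and $\CCC=\av(\mathbf{21})$: then $\CCC$ is $\KKK$-griddable (a $1\times1$ matrix suffices), yet $\CCC$ contains $\mathbf{12}\oplus\mathbf{12}\oplus\cdots\oplus\mathbf{12}$ of every length, and $\mathbf{12}$ is a basis element of $\av(\mathbf{12})\in\KKK$. Vatter's actual theorem phrases the obstruction in terms of basis elements of the single class $\SSS_1\cup\cdots\cup\SSS_k$ (being $\KKK$-griddable is equivalent to being griddable by that union class), and with that reading necessity is immediate with no pigeonhole over cells: a basis element of the union fits in no cell, so $m=t+u-1$ already works.

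The sufficiency direction --- which you rightly identify as the substantial one, and which is the real content of Vatter's theorem --- is announced rather than proved. The assertion that the no-long-sum and no-long-skew-sum hypotheses ``are exactly what bound, through an Erd\H{o}s--Szekeres/Ramsey extraction, the number of cuts needed'' is a restatement of the theorem, not an argument: you never say what is extracted, how a bound on the length of sums of basis elements converts into a bound on the number of cuts, or how the cuts are chosen. (For what it is worth, this direction of the statement is true as worded, since its hypothesis is stronger than the union-basis hypothesis Vatter needs.) Your final assembly step is also unsound as described: across different permutations of $\CCC$, the blocks occupying a fixed grid position need not lie in a common member of $\KKK$ --- one permutation's block may lie only in $\SSS_1$, another's only in $\SSS_2$ --- so ``a member of $\KKK$ that contains every block ever seen there'' need not exist. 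This too is fixable (grid by the union class, then split into a larger matrix, e.g.\ block-diagonally over the finitely many assignments of members of $\KKK$ to cells), but the fix is absent, and nothing resembling the substantive construction in Vatter's proof of this direction appears in your sketch.
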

Consequently,
three of the permutation classes
avoiding two patterns of length four
that were
still waiting to be
enumerated when this paper was first written (see~\cite{WikiEnumPermClassesThin})
are subclasses of a generalised grid class.
\begin{bullets}
\item $\av(\mathbf{4312}, \mathbf{3214})$ is $\{\av(\mathbf{12}) , \av(\mathbf{321})\}\+$-griddable.
\item $\av(\mathbf{4231}, \mathbf{3214})$ is $\{\av(\mathbf{12}) , \av(\mathbf{321})\}\+$-griddable.
\item $\av(\mathbf{4132}, \mathbf{3214})$ is $\{\av(\mathbf{132}) , \av(\mathbf{321})\}\+$-griddable.
\end{bullets}
Miner~\cite{Miner2016} has subsequently derived the generating functions for these classes, using techniques very similar to those used here.

\acknowledgements
The author is grateful to Cyril Banderier for pointing him in the right direction concerning proving the Gaussian limit law for top points in $\DDD$.
Michael Albert's \emph{PermLab} software~\cite{PermLab} was helpful for visualising the structure of permutations in class $\DDD$.
\emph{Mathematica}~\cite{Mathematica} was used for algebraic manipulation.

\emph{Soli Deo gloria!}

\bibliographystyle{plain}
\small
\bibliography{../bib/mybib}

\end{document}